\renewcommand{\email}[2][]{%
  \ifx\emails\@empty\relax\else{\g@addto@macro\emails{,\space}}\fi%
  \@ifnotempty{#1}{\g@addto@macro\emails{\textrm{(#1)}\space}}%
  \g@addto@macro\emails{#2}%
}
\setlist[enumerate]{label=\upshape(\arabic*)}
\numberwithin{equation}{section}
\newtheorem{thm}{Theorem}
\newtheorem{lem}[thm]{Lemma}
\newtheorem{prop}[thm]{Proposition}
\newtheorem{cor}[thm]{Corollary}
\newtheorem{conj}[thm]{Conjecture}
\newtheorem{ques}[thm]{Question}
\theoremstyle{definition}
\theoremstyle{remark}
\newtheorem{rmk}[thm]{Remark}
\begin{document}

\title{Equivariant log-concavity and equivariant K\"ahler packages}

\author{Tao Gui}
\address[A1]{Academy of Mathematics and Systems Science, Chinese Academy of Sciences, Beijing, P.R. China}
\email[]{guitao18@mails.ucas.ac.cn}

\author{Rui Xiong}
\address[A2]{Department of Mathematics and Statistics, University of Ottawa, 150 Louis-Pasteur, Ottawa, ON,
K1N 6N5, Canada}
\email[]{rxion043@uottawa.ca}

\maketitle

\begin{abstract}
We show that the exterior algebra $\Lambda_{R}\left[\alpha_{1}, \cdots, \alpha_{n}\right]$, which is the cohomology of the torus $T=(S^{1})^{n}$, and the polynomial ring $\mathbb{R}\left[t_{1}, \ldots, t_{n}\right]$, which is the cohomology of the classifying space $B (S^{1})^{n}=\left(\mathbb{C} \mathbb{P}^{\infty}\right)^{n}$, are $S_{n}$-equivariantly log-concave. We do so by explicitly giving the $S_{n}$-representation maps on the appropriate
sequences of tensor products of polynomials or exterior powers and proving that these maps satisfy the hard Lefschetz theorem. Furthermore, we prove that the whole K\"ahler package, including the Poincar\'e duality, the hard Lefschetz theorem, and the Hodge--Riemann bilinear relations, holds on the corresponding sequences in an equivariant setting.

\smallskip
\noindent \textbf{Keywords.} Log-concavity, 
Representation of symmetric group, 
Tensor products of representation, 
Equivariant log-concavity,  
Hard Lefschetz, K\"ahler package
\end{abstract}

\section{Introduction}
 
The \emph{K\"ahler package} is an algebraic structure that satisfies a “package” of properties tending to come bundled together: Poincar\'e duality, hard Lefschetz,
and Hodge--Riemann bilinear relations. They were first discovered in the cohomology of smooth complex projective algebraic varieties, or more generally, compact K\"ahler manifolds, but have since become quite ubiquitous in a realm that goes beyond that of K\"ahler geometry. For example, see \cites{mcmullen1993simple,barthel2002combinatorial,bressler2003intersection,karu2004hard,braden2006remarks,elias2014hodge,zbMATH06786900,zbMATH07367669,adiprasito2018hodge,braden2020singular}.
In this paper, we discover two new K\"ahler packages that are equivariant and have no geometric origin, see Theorem \ref{main} (Theorem \ref{kah} and \ref{kah2}). The equivariant log-concavity hints at our discoveries for these structures.

Recall that a sequence of real numbers $a_{0}, a_{1}, \ldots$ is called \emph{log-concave} if 
$$
a_{i}^{2} \geq a_{i-1} a_{i+1} \text { for all $i\geq 1$} .
$$
A finite log-concave sequence  of positive numbers is always \emph{unimodal}, that is, there exists an index $i$ such that
$$a_{0} \leq \cdots \leq a_{i-1} \leq a_{i} \geq a_{i+1} \geq \cdots \geq a_d.$$
Log-concave and unimodal sequences turn out to be very common in algebra, geometry, and combinatorics, see \cites{stanley1989log, brenti1994combinatorics,stanley2000positivity}. 
However, proving that certain specific sequences are log-concave might be quite difficult. Some recent work \cites{huh2012milnor, adiprasito2018hodge, branden2020lorentzian} found a deep connection between log-concavity and Hodge--Riemann bilinear relations of Hodge theory, see the ICM paper  \cite{huh2018combinatorial} for a survey.

Many log-concave phenomena appear in representation theory and algebraic topology.
For example, Nakajima \cite{nakajima2003t} proved that the Kirillov--Reshetikhin modules satisfy log-concave-type properties known as the $T$-system and $Q$-system for quantum loop algebras. Okounkov \cite{okounkov1996brunn} introduced the Newton--Okounkov bodies to prove that, in the asymptotic sense, the multiplicities of the homogeneous coordinate ring of a $G$-stable irreducible subvariety of $\mathbf{P}(V)$ are log-concave for any representation $V$ of a reductive group $G$. 
In a sequel paper \cite{okounkov2003would}, based on heuristics and analogies of physical principles, Okounkov made a remarkable conjecture that the Littlewood--Richardson coefficients $c_{\lambda\mu}^{\nu}$ are log-concave in $(\lambda, \mu, \nu)$. Although Okounkov's conjecture is false in general \cite{chindris2007counterexamples}, many consequences and interesting special cases are true, see for example \cites{knutson1999honeycomb,lam2007schur,huh2022logarithmic}.

Another log-concavity phenomenon comes from group actions. Following \cite{matherneequivariant}, see also \cite{okounkov2003would}, we say that a sequence of finite-dimensional representations $V_{0}, V_{1}, \ldots$, for example, a graded representation $V=\bigoplus_{i=0}^\infty V_i$, is
\emph{equivariantly log-concave} 
if 
\begin{equation} \label{equ}
V_{i-1} \otimes V_{i+1} \text { is a subrepresentation of } V_{i} \otimes V_{i}, \text { for all } i\geq 1, 
\end{equation} 
which is a categorification of the log-concavity of its dimension sequence. 
Suppose a group $G$ acts on a topological space $X$, which induces a graded representation of $G$ on the cohomology $H^{*}(X)$. If $H^{*}(X)$ is equivariantly log-concave, then the Betti numbers of $X$ form a log-concave sequence. In \cite{matherneequivariant}, the authors made $G$-equivariant log-concavity conjectures for the cohomology of various configuration spaces and exploited the theory of representation stability to give computer-assisted proofs in low degrees for $G=S_{n}$. But despite the strong experimental evidence, they did not propose any strategy that could prove $S_{n}$-equivariant log-concavity for all $n$ in all degrees, and the full conjectures remain open.

Our results can be viewed as an attempt to attack these equivariantly log-concave conjectures. Our approach is to reduce the problem of equivariant log-concavity to the equivariant unimodality of appropriate tensor product sequences and establish the hard Lefschetz theorem to prove the unimodality. 
In this paper, we use this idea to establish the equivariant log-concavity for the polynomial ring and the exterior algebras by explicit construction of equivariant Lefschetz operators. Actually, more is true ---the Hodge--Riemann relations also hold which forms an \emph{equivariant K\"ahler package} combined with Poincar\'e duality and hard Lefschetz. 

Denote 
$$H_{n,m}=\bigoplus_{i = 0}^{m} H_{n,m}^{-m+2i}, \text{ with }H_{n,m}^{-m+2i}:=D^{i} \otimes R^{m-i},$$
where $D^{i}$ (resp., $R^{m-i}$) denotes the degree-$i$ (resp., degree-($m-i$)) piece of the polynomial ring $D=\mathbb{R}\left[d_{1}, \ldots, d_{n}\right]$ (resp., $R=\mathbb{R}\left[x_{1}, \ldots, x_{n}\right]$). Let $$L: H_{n,m}^{i} \longrightarrow H_{n,m}^{i+2}, \text{ where }
L:=\sum_{i=1}^{n} d_{i} \otimes \frac{\partial}{\partial x_{i}}.
$$ We define a pairing on $H_{n,m}$, see formula \eqref{pair}.

Similarly, denote
$$H_{n,m}^{\prime}=\bigoplus_{i=0}^{m} (H_{n,m}^{\prime})^{-m+2 i}, \text { with } (H_{n,m}^{\prime})^{-m+2 i}:=\Lambda^{i} \otimes (\Lambda^{*})^{m-i},$$
where $\Lambda^{i}$ (resp., $(\Lambda^{*})^{m-i}$) denotes the degree- $i$ (resp., degree-($m-i$)) piece of the exterior algebra $\Lambda(V)=\Lambda_{\mathbb{R}}\left[\theta_{1}, \ldots, \theta_{n}\right]$ (resp., $\Lambda(V^{*})=\Lambda_{\mathbb{R}}\left[\xi_{1}, \ldots, \xi_{n}\right]$), we demand that $\theta_{1}, \ldots, \theta_{n}$ and $\xi_{1}, \ldots, \xi_{n}$ form dual basis of $V$ and $V^{*}$.
Let $$L: (H_{n,m}^{\prime})^{i} \longrightarrow (H_{n,m}^{\prime})^{i+2}, \text{  where } L:=\sum_{k=1}^{n} e_{\theta_{k}} \otimes i_{\theta_{k}},$$ here $e_{\theta_{k}}$ (resp., $i_{\theta_{k}}$) is the exterior (resp., interior) product with $\theta_{k}$. We define a pairing on $H_{n,m}^{\prime}$, see formula \eqref{pair2}. 

$S_{n}$ naturally acts on the spaces. The followings are our main results.

\begin{thm}[Theorem \ref{kah} and \ref{kah2}] \label{main}
Under the notations introduced above, for $H=H_{n,m}$ or $H_{n,m}^{\prime}$, we have
\begin{itemize}
\item[(a)] (Poincar\'e duality)  For all $i \geq 0$, the bilinear pairing 
$$
\langle-,-\rangle: H^{i} \times H^{-i} \longrightarrow \mathbb{R}
$$ 
is $S_{n}$-equivariant and non-degenerate;
\item[(b)] (Hard Lefschetz) The corresponding operator $L: H^{i} \longrightarrow H^{i+2}$ is $S_{n}$-equivariant and satisfying hard Lefschetz, i.e., for all $i \geq 0$,
\begin{equation*}
L^{i}: H^{-i} \longrightarrow H^{i}
\end{equation*}
is an isomorphism.
\item[(c)] (Hodge--Riemann relations) For all $0 \leq i \leq m/2$, the Lefschetz form on $H^{-m+2i}$, defined by
$$(a, b)_{L}^{-m+2i}:=\left\langle a, L^{m-2i} b\right\rangle,$$
is $S_{n}$-equivariant and $(-1)^{i}$-definite on $P_{L}^{-m+2i}:=\ker\left(L^{m-2i+1}\right) \cap H^{-m+2i}$.
\end{itemize}
\end{thm}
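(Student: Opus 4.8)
\emph{Setup.} The plan is to exhibit $H=H_{n,m}$ (respectively $H_{n,m}'$) as a finite-dimensional $\mathfrak{sl}_2$-module with $L$ acting as a lowering-of-weight operator, and then to extract (a)--(c) from $\mathfrak{sl}_2$-theory together with a small positivity input. First I would write down the missing generators: for $H_{n,m}$ put $\Lambda:=\sum_{i=1}^{n}\frac{\partial}{\partial d_i}\otimes x_i$, for $H_{n,m}'$ put $\Lambda:=\sum_{k=1}^{n} i_{\xi_k}\otimes e_{\xi_k}$ (interior resp.\ exterior product with $\xi_k$), and let $h$ act on $H^{j}$ as multiplication by $-j$; a direct computation gives $[\Lambda,L]=h$, $[h,\Lambda]=2\Lambda$, $[h,L]=-2L$, so $(\Lambda,h,L)$ is an $\mathfrak{sl}_2$-triple on $H$ whose weight spaces are the graded pieces $H^{-m+2i}$. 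Since every $\sigma\in S_n$ commutes with $L$, $\Lambda$ and $h$ (all three are symmetric in the coordinates) and preserves the pairing (\ref{pair})/(\ref{pair2}) (built symmetrically from the apolarity, resp.\ duality, pairings of the generators), the $\mathfrak{sl}_2$- and $S_n$-actions commute; this yields the equivariance assertions in (a)--(c) for free and exhibits each $P_L^{-m+2i}=\ker(L^{m-2i+1})\cap H^{-m+2i}$ as an $S_n$-subrepresentation.

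\emph{Poincar\'e duality and hard Lefschetz.} Part (a) is then immediate: the pairing is a product of the pairings $D^{a}\times R^{a}\to\mathbb{R}$ (resp.\ $\Lambda^{a}(V)\times\Lambda^{a}(V^{*})\to\mathbb{R}$), each of which is perfect, so $\langle-,-\rangle\colon H^{i}\times H^{-i}\to\mathbb{R}$ is non-degenerate. Part (b) is the classical hard Lefschetz statement for $\mathfrak{sl}_2$: a finite-dimensional $\mathfrak{sl}_2$-module is a direct sum of irreducibles $V(j)$, and on each $V(j)$ the power $L^{i}$ carries the weight-$i$ space $H^{-i}$ isomorphically onto the weight-$(-i)$ space $H^{i}$ for every $i\ge 0$; summing over the constituents of $H$ gives the claim.

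\emph{Hodge--Riemann.} For (c) I would exploit the coordinatewise tensor structure. Under the natural identifications $D\otimes R\cong\bigotimes_{k=1}^{n}(\mathbb{R}[d_k]\otimes\mathbb{R}[x_k])$ and $\Lambda(V)\otimes\Lambda(V^{*})\cong\bigotimes_{k=1}^{n}(\Lambda[\theta_k]\otimes\Lambda[\xi_k])$, the operator $L$ becomes $\sum_{k}L_k$ with $L_k$ supported on the $k$-th factor, the pairing becomes the tensor product of the one-variable pairings, and the total-degree grading is the sum of the per-factor gradings; restricting to total degree $m$ gives
\[
H_{n,m}\ \cong\ \bigoplus_{m_1+\cdots+m_n=m}\ \bigotimes_{k=1}^{n}\bigl(\mathbb{R}[d_k]\otimes\mathbb{R}[x_k]\bigr)_{m_k},
\]
and similarly for $H_{n,m}'$ with the constraint $0\le m_k\le 2$. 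Each one-variable factor, equipped with $L_k$ and its induced pairing, is an irreducible polarized Lefschetz module which, after a harmless rescaling, is a copy of $H^{\bullet}(\mathbb{C}\mathbb{P}^{m_k})$ with a Lefschetz operator and its Poincar\'e pairing (the exterior factors being $H^{\bullet}(\mathrm{pt})$, $H^{\bullet}(\mathbb{C}\mathbb{P}^{1})$, $H^{\bullet}(\mathrm{pt})$ for $m_k=0,1,2$), and the only nontrivial instance of the Hodge--Riemann relation there --- definiteness on the one-dimensional bottom primitive --- is a one-line computation (for the polynomial factor, $\langle 1\otimes x^{m_k},\,L_k^{m_k}(1\otimes x^{m_k})\rangle=(m_k!)^{2}>0$). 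One then invokes the stability of the Hodge--Riemann property under tensor products and direct sums of polarized Lefschetz modules; applied to the display above this gives (c), the primitive decomposition and the Lefschetz form splitting along the direct sum.

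\emph{The main difficulty.} The hard point is precisely this last tensor-product stability. Unlike (a) and (b) it is not formal: the definiteness in (c) is, in essence, the mixed Hodge--Riemann bilinear relations. The concrete route is that each summand $\bigotimes_k(\mathbb{R}[d_k]\otimes\mathbb{R}[x_k])_{m_k}$ is $H^{\bullet}$ of the smooth projective variety $\mathbb{C}\mathbb{P}^{m_1}\times\cdots\times\mathbb{C}\mathbb{P}^{m_n}$ with $L$ induced by an ample class, so the classical K\"ahler package applies; the work is then in verifying that the explicitly defined pairings (\ref{pair}) and (\ref{pair2}) match the Poincar\'e pairing on each constituent --- up to positive scalars in the polynomial case, and up to Koszul sign twists that must conspire to produce exactly the claimed $(-1)^{i}$ in the exterior case. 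Carrying out this matching $S_n$-equivariantly is what upgrades the classical non-equivariant (and geometric) statements to the equivariant package of the theorem.
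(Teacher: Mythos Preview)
Your proposal is correct and follows essentially the same route as the paper: construct the $\mathfrak{sl}_2$-triple $(L,\Lambda,h)$ (the paper writes $F$ for your $\Lambda$ and uses the opposite sign convention for $h$), deduce (a) from the dual-basis description of the pairing and (b) from Lemma~\ref{HL}, and for (c) decompose $H$ along the multi-index $\alpha=(m_1,\ldots,m_n)$ into an orthogonal direct sum of pieces identified with $H^{*}(\mathbb{CP}^{m_1}\times\cdots\times\mathbb{CP}^{m_n})$, where the classical K\"ahler package supplies the Hodge--Riemann relations. The paper carries out exactly this program, including the positivity check you single out (matching $\langle-,-\rangle$ with the Poincar\'e pairing up to a positive scalar via \eqref{dual}, and handling the Koszul signs in the exterior case via the global correction $\varepsilon$ of \eqref{corr}).
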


The corresponding equivariant Lefschetz operators give the explicit maps realizing the equivariant log-concavity of the symmetric powers and the exterior powers as graded representations of $S_{n}$. And, in fact, they are equivariant for the symmetric powers and the exterior powers of any representation of any group $G$; see Remark \ref{wider}.

\begin{cor}
The symmetric powers (polynomial ring) and the exterior powers (exterior algebra) of a given representation $V$ of any group $G$ are equivariantly log-concave. Furthermore, the required injective maps are given by the corresponding equivariant Lefschetz operators $L$ above.
\end{cor}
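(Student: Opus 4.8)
The plan is to read the corollary straight off Theorem \ref{main}: use a basis of the given representation as the ``variables'' in the models $H_{n,m}$ and $H'_{n,m}$, observe that the Lefschetz operators constructed there are equivariant not just for $S_n$ but for the whole general linear group, and then extract the required injections from the hard Lefschetz isomorphisms.

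Let $V$ be a representation of $G$, and assume first that $\dim V = n < \infty$. Use a basis of $V$ as the variables $x_1,\dots,x_n$ (and a second copy for the $d_1,\dots,d_n$) in the definition of $H_{n,m}$; then $D^i\cong R^i\cong\operatorname{Sym}^i V$ as $G$-representations and $H_{n,m}^{-m+2i}=\operatorname{Sym}^i V\otimes\operatorname{Sym}^{m-i}V$. The key point is that $L=\sum_i d_i\otimes\partial/\partial x_i$ is the image of the identity endomorphism $\operatorname{id}_V\in V\otimes V^*=\operatorname{End}(V)$ under the canonical $GL(V)$-equivariant map $V\otimes V^*\to\operatorname{End}(D)\otimes\operatorname{End}(R)\subseteq\operatorname{End}(H_{n,m})$ --- the tensor product of the multiplication map $V\to\operatorname{End}(D)$ with the derivation map $V^*\to\operatorname{End}(R)$, $\xi\mapsto\partial_\xi$. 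Since $\operatorname{id}_V$ is conjugation-invariant, $L$ commutes with all of $GL(V)$, in particular with $G$; and as an equivariant linear isomorphism is automatically an equivariant isomorphism, the hard Lefschetz isomorphisms of Theorem \ref{main}(b) are $G$-equivariant as well. The same discussion applies to $H'_{n,m}$ once its second tensor factor is rewritten as $\Lambda^\bullet V$ by contracting multivectors against covectors: the operator $\sum_k e_{\theta_k}\otimes i_{\theta_k}$ is again the image of $\operatorname{id}_V\in V\otimes V^*$, hence $GL(V)$- and so $G$-equivariant (cf.\ Remark \ref{wider}).

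Now fix an integer $k\ge 1$ and apply Theorem \ref{main}(b) to $H=H_{n,2k}$. Its pieces in degrees $-2$, $0$, and $2$ are $\operatorname{Sym}^{k-1}V\otimes\operatorname{Sym}^{k+1}V$, $\operatorname{Sym}^{k}V\otimes\operatorname{Sym}^{k}V$, and $\operatorname{Sym}^{k+1}V\otimes\operatorname{Sym}^{k-1}V$, and hard Lefschetz asserts that $L^2\colon H^{-2}\to H^{2}$ is an isomorphism. Since $L^2$ factors as $\bigl(L\colon H^0\to H^2\bigr)\circ\bigl(L\colon H^{-2}\to H^0\bigr)$, the first factor
$$
L\colon\operatorname{Sym}^{k-1}V\otimes\operatorname{Sym}^{k+1}V\longrightarrow\operatorname{Sym}^{k}V\otimes\operatorname{Sym}^{k}V
$$
is injective, and it is $G$-equivariant by the preceding paragraph. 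Letting $k$ range over all positive integers yields precisely condition \eqref{equ} for $\operatorname{Sym}^\bullet V=\bigoplus_i\operatorname{Sym}^i V$, realized by the Lefschetz operators as claimed; running the same argument with $H'_{n,2k}$ produces the $G$-equivariant injections $\Lambda^{k-1}V\otimes\Lambda^{k+1}V\hookrightarrow\Lambda^{k}V\otimes\Lambda^{k}V$ for the exterior algebra. For $V$ of arbitrary dimension one reduces to the finite-dimensional case, since any homogeneous element involves only finitely many basis vectors, on whose span $L$ restricts to the corresponding finite-variable operator, so injectivity persists.

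With Theorem \ref{main} already established I foresee no genuine difficulty; the one step deserving care --- and the only place the phrase ``any group $G$'' is actually used --- is the upgrade of equivariance of $L$ from the permutation group $S_n$ to the full $GL(V)$, i.e.\ checking that the verbatim formulas are natural under arbitrary linear changes of variable. For $H_{n,m}$ this is transparent from the identification of $L$ with $\operatorname{id}_V$; for $H'_{n,m}$ it requires only keeping track of which occurrences of the $\theta_k$ act as vectors and which as covectors. Note finally that log-concavity uses only the equivariance of the operator $L$, not of the bilinear pairing, so no invariant form on $V$ is required.
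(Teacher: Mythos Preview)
Your argument is correct and coincides with the paper's own route: the paper does not spell out a separate proof of this corollary but simply reads it off hard Lefschetz together with Remark~\ref{wider}, which records exactly your key point that $L$ is $GL(V)$-equivariant (your identification of $L$ with the image of $\operatorname{id}_V\in V\otimes V^*$ is a clean way to see this, more explicit than the paper's citation). The only cosmetic wrinkle is in the exterior case, where the operator on $\Lambda^\bullet V\otimes\Lambda^\bullet V$ should be written $\sum_k e_{\theta_k}\otimes i_{\xi_k}$ with $\xi_k\in V^*$ (as in the paper's corollary after Theorem~\ref{kah2}), not $i_{\theta_k}$; you flag this yourself in the last paragraph, so no harm done.
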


For polynomial rings, the Lefschetz operator $L$ is essentially the same as observed in \cite[Section 3.1]{haiman1994conjectures}, but we rediscover it from a different perspective: $d_{1}, \ldots, d_{n}$ and $x_ {1}, \ldots, x_{n}$ should live in the dual vector spaces and form a dual basis. Thus, our pairing \eqref{pair} differs from the more classical \emph{apolar form} in diagonal harmonic theory. Our pairing \eqref{pair} is compatible with the Lefschetz operator $L$, and we discover that the resulting Lefschetz form satisfies Hodge--Riemann bilinear relations, which is much stronger than hard Lefschetz, forming a full K\"ahler package combined with Poincar\'e duality and hard Lefschetz.

For the exterior algebra, the Lefschetz operator $L$ is new, and we establish the full K\"ahler package in the equivariant setting. We also find that the “usual” gradings on tensor products of the exterior algebra satisfy Poincar\'e duality and hard Lefschetz but not Hodge--Riemann relations, see Theorem \ref{kah3} and Remark \ref{coun}. The hard Lefschetz theorem for this grading is essentially the same as \cite[Theorem 3.2]{kim2022lefschetz}, but our proof is simpler than theirs since their proof is combinatorial and relies on the incidence matrix of the Boolean poset. They use the equivariant Lefschetz element as a key tool to study the \emph{fermionic diagonal coinvariants}. We would like to see whether our constructions have other implications for the diagonal coinvariant ring and the fermionic diagonal coinvariant ring.

The detailed constructions and proofs of the aforementioned theorems will be given in Section \ref{section:eqKah}. 
Our constructions have the following notable features:
\begin{itemize}
\item The Poincar\'e pairing $\langle-,-\rangle$, the Lefschetz operator $L$, and the Lefschetz form $(a, b)_{L}^{-m+2i}$ are all $S_{n}$-equivariant, which is rare in Lefschetz theory.
\item The adjoint operator of the Lefschetz operator $L$ can be written explicitly. 
\item The proof of hard Lefschetz and Hodge--Riemann relations is natural and it takes advantage of the geometry of the product of projective spaces. 
\end{itemize}

It would be interesting to know whether our approach could shed some new light on other (equivariant) log-concavity questions and conjectures.

\setcounter{tocdepth}{2}

\section{Preliminaries: K\"ahler package and Lefschetz linear algebra}

In this section, we recall properties in the K\"ahler package in the linear algebra context and refer to  \cite[Section 2]{elias2014hodge} and \cite[Chapter 17]{elias2020introduction} for the omitted details.

Consider a graded finite-dimensional real vector space
$$H=\bigoplus_{i \in \mathbb{Z}} H^{i}.$$
We denote $b_{i}=\operatorname{dim} H^{i}$, the $i$-th \emph{Betti number} of $H$. We say $H$ satisfies \emph{Poincar\'e duality} (PD) if there exists a non-degenerate symmetric graded bilinear form
$$\langle-,-\rangle: H \times H \longrightarrow \mathbb{R} ,$$
where \emph{graded} means that $\left\langle H^{i}, H^{j}\right\rangle=0$ if $i \neq-j$.
It follows that $\langle-,-\rangle$ induces an isomorphism between $H^{-i}$ and $\left(H^{i}\right)^{*}$ and $b_{i}=b_{-i}$ for all $i \in \mathbb{Z}$. That is, if $H$ satisfies Poincar\'e duality, the sequence of the Betti numbers will be symmetric, and we regard this as a numerical shadow of the duality.

If $H$ satisfies Poincar\'e duality, then we fix a such bilinear form $\langle-,-\rangle$.
In this case, we say a degree two linear map
$$\operatorname{L}: H^{i} \longrightarrow H^{i+2}$$
is a \emph{Lefschetz operator} if $\langle \operatorname{L} a, b\rangle=\langle a, \operatorname{L} b\rangle$ for all $a, b \in H$. If $\operatorname{L}$ is a Lefschetz operator, then it is said to satisfy \emph{hard Lefschetz} (HL) if for all $i \geq 0$, 
$$
\operatorname{L}^{i}: H^{-i} \longrightarrow H^{i}
$$ is an isomorphism. The following lemma is standard (see for example, Theorem 4 in Section 5, Chapter IV of \cite{serre2000complex}).

\begin{lem} \label{HL}
A degree two operator $\operatorname{L}$ on $H$ satisfies hard Lefschetz if and only if there is an action of $\mathfrak{s l}_{2}(\mathbb{R})=\langle e, f, h\rangle$ on $H$ with $\operatorname{e}$ acting as $\operatorname{L}$, and $\operatorname{h} \cdot v=i v$ for all $v \in H^{i}$. Moreover, this $\mathfrak{s l}_{2}(\mathbb{R})$-action is unique.
\end{lem}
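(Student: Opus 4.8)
The plan is to prove both implications by passing through the standard $\mathfrak{sl}_2$-theory. For the ``if'' direction, suppose we are given an $\mathfrak{sl}_2(\mathbb{R})$-action with $e$ acting as $L$ and $h$ acting on $H^i$ by the scalar $i$. By complete reducibility of finite-dimensional $\mathfrak{sl}_2(\mathbb{R})$-representations, $H$ decomposes as a direct sum of irreducibles, and on each irreducible summand $V_d$ (the $(d+1)$-dimensional irreducible, with $h$-weights $-d, -d+2, \dots, d$) the classical theory tells us that $e^k$ maps the $(-k)$-weight space isomorphically onto the $k$-weight space for every $k \ge 0$. Here the $h$-grading is exactly the cohomological grading, so summing over all irreducible summands shows $L^i = e^i \colon H^{-i} \to H^i$ is an isomorphism for all $i \ge 0$, i.e.\ hard Lefschetz holds. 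Note that for this direction the Poincar\'e pairing plays no role.

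For the ``only if'' direction, assume $L$ satisfies hard Lefschetz. We must manufacture the operators $h$ and $f$. Define $h$ to act on $H^i$ as multiplication by $i$; this is the candidate semisimple element, and $[h, L] = 2L$ holds automatically from the grading since $L$ raises degree by $2$. The real content is to produce $f \colon H^{i} \to H^{i-2}$ with $[e,f] = h$ and $[h,f] = -2f$; the second relation again just says $f$ lowers degree by $2$, so the crux is $[L, f] = h$. The standard construction is the following: first decompose $H = \bigoplus_{k \ge 0} L^k(\mathrm{Prim}^{-k})$, where $\mathrm{Prim}^{-k} = \ker(L^{k+1} \colon H^{-k} \to H^{k+2})$ is the space of primitive vectors in degree $-k$; hard Lefschetz guarantees that this is a direct sum decomposition of $H$ (this is the Lefschetz decomposition, proved by downward induction on degree using that $L^{k} \colon H^{-k} \to H^{k}$ is an isomorphism while $L^{k+1} \colon H^{-k-2} \to H^{k}$ is onto). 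On the summand $L^j(\mathrm{Prim}^{-k})$ sitting in degree $-k + 2j$, define $f$ by the formula $f(L^j p) = j(k - j + 1) L^{j-1} p$ for $p \in \mathrm{Prim}^{-k}$. One then checks directly from this formula that $[L, f] = h$ holds on each summand, which assembles the desired $\mathfrak{sl}_2(\mathbb{R})$-action.

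For uniqueness: if $(e,f,h)$ and $(e,f',h')$ are two such actions with the same $e = L$ and the same $h = h'$ (both forced to be the grading operator), then $g := f - f'$ satisfies $[L, g] = 0$ and $g$ lowers degree by $2$. An element commuting with $e$ in any finite-dimensional $\mathfrak{sl}_2$-representation must kill all lowest-weight vectors, hence kills everything it can reach by applying $e$; since the grading operator $h$ is pinned down and $[h,g] = -2g$, a short weight-space argument (a degree-lowering operator commuting with the raising operator $L$ is forced to vanish under hard Lefschetz, because $L^{i} g = g' L^{i}$ relations combined with the isomorphisms $L^i \colon H^{-i} \to H^i$ leave no room) gives $g = 0$, so $f = f'$.

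The main obstacle is verifying the bracket relation $[L, f] = h$ for the explicitly defined $f$; this is the one genuine computation, and it is cleanest to organize it by checking it weight-space by weight-space on a single irreducible block $L^{\bullet}(\mathrm{Prim}^{-k})$, where it reduces to the scalar identity $j(k-j+1) - (j+1)(k-j) = -k + 2j$. Everything else --- complete reducibility, the Lefschetz decomposition, and the uniqueness argument --- is formal. Since this is the standard $\mathfrak{sl}_2$ Lefschetz lemma, one may alternatively simply cite \cite[Section 2]{elias2014hodge} or \cite[Chapter 17]{elias2020introduction}, but the argument above is short enough to include.
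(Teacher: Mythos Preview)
Your proof is correct and is precisely the standard argument; the paper itself does not give a proof of this lemma at all, declaring it ``standard'' and deferring to \cite[Section 2]{elias2014hodge} and \cite[Chapter 17]{elias2020introduction}, where one finds exactly the construction you wrote down (Lefschetz decomposition via hard Lefschetz, the explicit formula $f(L^j p)=j(k-j+1)L^{j-1}p$, and the commutator check). Your uniqueness paragraph is slightly informal but the intended argument is sound: if $g=f-f'$ commutes with $L$ and lowers degree by $2$, then for primitive $p\in\mathrm{Prim}^{-k}$ one has $L^{k+1}(gp)=g(L^{k+1}p)=0$, and since $L^{k+1}$ is injective on $H^{-k-2}$ (as $L^{k+2}\colon H^{-k-2}\to H^{k+2}$ is an isomorphism and factors through it) this forces $gp=0$, whence $g=0$ on all of $H$ by the Lefschetz decomposition.
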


In particular, if $\operatorname{L}$ satisfies hard Lefschetz, the Betti numbers of $H$ will satisfy  $$
\cdots \leq b_{-4} \leq b_{-2} \leq b_{0} \geq b_{2} \geq b_{4} \geq \cdots$$ 
and 
$$
\cdots \leq b_{-3} \leq b_{-1} = b_{1} \geq b_{3} \geq \cdots.
$$ 
Thus we regard the unimodality (with symmetry) as a numerical shadow of the hard Lefschetz property (assuming \emph{parity vanishing}: $
H^{\text {odd }}=0 \text { or } H^{\text {even }}=0
$).

Let $\operatorname{L}$ be a Lefschetz operator. For each $i \geq 0$, define the  \emph{Lefschetz form} on $H^{-i}$ with respect to $\operatorname{L}$ as 
$$
(a, b)_{L}^{-i}=\left\langle a, \operatorname{L}^{i} b\right\rangle
$$
for $a, b \in H^{-i}$, which is non-degenerate if and only if $\operatorname{L}$ satisfies hard Lefschetz because $\langle-,-\rangle$ is non-degenerate by assumption. 

Suppose that $\operatorname{L}$ is a Lefschetz operator satisfying hard Lefschetz. For all $i \geq 0$ set
$$
P_{L}^{-i}=\operatorname{ker}\left(L^{i+1}\right) \cap H^{-i},
$$ elements of this subspace are called \emph{primitive vectors} of degree $-i$, and this subspace is called a \emph{primitive subspace}, which is indeed the \emph{lowest weight space} in $H^{-i}$ (where we view $H$ as an $\mathfrak{s l}_{2}(\mathbb{R})$-representation via Lemma \ref{HL}.)

If $L$ is a Lefschetz operator satisfying hard Lefschetz, then we have the \emph{primitive decomposition}
$$H=\bigoplus_{i \geq 0} \bigoplus_{i \geq j \geq 0} L^{j}\left(P_{L}^{-i}\right),$$
and for $i \geq 0$,
\begin{equation} \label{pride}
H^{-i}=P_{L}^{-i} \oplus L\left(P_{L}^{-i-2}\right) \oplus L^{2}\left(P_{L}^{-i-4}\right) \oplus \cdots.
\end{equation}
 In virtue of Lemma \ref{HL}, it is the isotypic decomposition of the $\mathfrak{s l}_{2}(\mathbb{R})$-representation, where $\bigoplus_{i \geq j \geq 0} L^{j}\left(P_{L}^{-i}\right)$ is the sum of copies of the irreducible $\mathfrak{s l}_{2}(\mathbb{R})$-representations with highest weight $i$. For any Lefschetz
operator $L$, we have 
\begin{equation} \label{isome}
    (L a, L b)_{L}^{-(i-2)}=(a, b)_{L}^{-i}
    \end{equation}
for all $i \geq 2$ and $a, b \in H^{-i}$. It follows that if $L$ satisfies hard Lefschetz, then the primitive decomposition is orthogonal with respect to the Lefschetz forms.
Consequently, if $L$ satisfies hard Lefschetz, then the Lefschetz form is determined
by its restriction to the primitive subspaces.

Assume $H^{\text {odd }}=0$ or $H^{\text {even }}=0$ and that $L$ is a Lefschetz operator satisfying hard Lefschetz. We say that $(H,\langle-,-\rangle, L)$ satisfies \emph{Hodge--Riemann bilinear relations} (HR) if the restriction of the Lefschetz form to the 
primitive subspace
$$
\left.(-,-)_{L}^{m +2 i}\right|_{P_{L}^{m +2 i}}
$$
is $(-1)^{i}$-definite, where $m$ denotes the most negative integer such that $H^{m} \neq 0$. If $(H,\langle-,-\rangle, L)$ satisfies Hodge--Riemann bilinear relations, then in particular, each Lefschetz form is non-degenerate and $L$ satisfies hard Lefschetz. By \eqref{pride} and \eqref{isome}, Hodge--Riemann bilinear relations predict that the mixed signature of the Lefschetz forms is coming from some definiteness on each primitive
subspace and can be expressed in terms of the Betti numbers of $H$.

We note that in the work \cites{adiprasito2018hodge,branden2020lorentzian,huh2012milnor} mentioned in paragraph 2 of the introduction, the log-concave property is regarded as a numerical shadow of Hodge--Riemann bilinear relations by interpreting the “Betti numbers” in their context (for example, the Betti numbers of the Orlik-Solomon algebra) as some “intersection numbers”. But we think that “Betti numbers=intersection numbers” is a miracle, and one purpose of this paper is to provide a different perspective on the (equivariant) log-concave property.

The main example of graded vector spaces that satisfies this package of properties is the cohomology ring of smooth complex projective algebraic varieties whose Hodge numbers $h^{p, q}$ are zero unless $p=q$\footnote{The Hodge--Riemann bilinear relations in K\"ahler geometry for more general varieties without the assumption on the Hodge numbers are more complicated than the formulation which we use, see Section 7 of Chapter 0 of \cite{griffiths2014principles}.}, with the operator $L$ given by multiplication by an ample class. (For example, any smooth projective variety whose cohomology ring is generated by algebraic cycles will have this property.) We note that most of the K\"ahler packages discovered so far have some sort of “geometric origin". In many algebraic settings, it is often difficult to find the “lowering operator” in Lemma \ref{HL} to prove hard Lefschetz. Instead, one usually finds some notion of “positivity” or “convexity” so that the Lefschetz operator can be deformed in a family (the “K\"ahler cone” or “ample cone”), and prove the whole package by induction.

\section{Equivariant log-concavity}

When considering the equivariant log-concavity of group representations, it is natural to ask for which group $G$ and its representation $V$, the sequences of symmetric powers ${{Sym^i(V)}}$ and exterior powers ${\wedge^i(V)}$ are equivariantly log-concave? Surprisingly, the answer is for any group and any representation! Indeed, we have

\begin{prop} \label{Prop1}
For any finite-dimensional complex representation $V$ of any group $G$, the sequences of symmetric powers ${{Sym^i(V)}}$ and exterior powers ${\wedge^i(V)}$ of $V$ are equivariantly log-concave.
\end{prop}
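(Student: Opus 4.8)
The plan is to make the embeddings of \eqref{equ} completely explicit by putting a $G$-equivariant $\mathfrak{sl}_2(\mathbb{C})$-action on a ``doubled'' copy of the relevant algebra and then extracting the maps from $\mathfrak{sl}_2(\mathbb{C})$-representation theory. Consider the symmetric powers first. Let $A=\mathrm{Sym}(V)\otimes\mathrm{Sym}(V)=\mathbb{C}[x_1,\dots,x_n]\otimes\mathbb{C}[y_1,\dots,y_n]$ with its $(\deg_x,\deg_y)$-bigrading and with $G$ acting diagonally, so that $x_1,\dots,x_n$ and $y_1,\dots,y_n$ span two copies of $V$ with matched bases. Put $e=\sum_k x_k\,\partial/\partial y_k$, $f=\sum_k y_k\,\partial/\partial x_k$, $h=\sum_k\bigl(x_k\,\partial/\partial x_k-y_k\,\partial/\partial y_k\bigr)$. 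A short computation with the product rule gives $[e,f]=h$, $[h,e]=2e$, $[h,f]=-2f$, so this is an $\mathfrak{sl}_2(\mathbb{C})$-action; $h$ acts on $\mathrm{Sym}^a(V)\otimes\mathrm{Sym}^b(V)$ by the scalar $a-b$, and $e$ raises and $f$ lowers the bidegree by $(\pm1,\mp1)$.

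The key observation is that $e,f,h$ are $G$-equivariant. After identifying the two copies of $V$ via the diagonal action, $e$ is the image of the canonical element $\sum_k v_k\otimes v_k^\ast\in V\otimes V^\ast$ under the $\mathrm{GL}(V)$-equivariant map $V\otimes V^\ast\to\mathrm{End}(A)$ sending $v\otimes\xi$ to multiplication by $v$ on the first factor followed by contraction against $\xi$ on the second; as $\sum_k v_k\otimes v_k^\ast$ is basis-free, it is fixed by the diagonal $\mathrm{GL}(V)$, so $e$ commutes with that action and a fortiori with $G$, and similarly for $f$ and $h$. Now fix $i\ge1$ and restrict the $\mathfrak{sl}_2(\mathbb{C})\times G$-action to the finite-dimensional total-degree-$2i$ part $A^{(2i)}=\bigoplus_{a+b=2i}\mathrm{Sym}^a(V)\otimes\mathrm{Sym}^b(V)$. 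By Weyl complete reducibility $A^{(2i)}$ is a sum of irreducible $\mathfrak{sl}_2(\mathbb{C})$-modules, each of highest weight $\ge0$; hence the raising operator $e$ is injective on every weight space of negative weight. Taking weight $-2$ yields a $G$-equivariant injection $e\colon\mathrm{Sym}^{i-1}(V)\otimes\mathrm{Sym}^{i+1}(V)\hookrightarrow\mathrm{Sym}^{i}(V)\otimes\mathrm{Sym}^{i}(V)$, i.e.\ \eqref{equ} for the sequence $\{\mathrm{Sym}^{i}(V)\}$.

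For exterior powers one runs the identical argument on $A=\Lambda(V)\otimes\Lambda(V)$ with $e=\sum_k e_{\theta_k}\otimes i_{\xi_k}$, $f=\sum_k i_{\xi_k}\otimes e_{\theta_k}$ and $h$ acting on $\Lambda^a(V)\otimes\Lambda^b(V)$ by $a-b$, where $\theta_1,\dots,\theta_n$ is a basis of $V$, $\xi_1,\dots,\xi_n$ the dual basis, and $e_\bullet$, $i_\bullet$ are exterior and interior multiplication. Here a small sign computation is needed to confirm $[e,f]=h$: it reduces, coordinate pair by coordinate pair, to the rank-one model $\Lambda(\mathbb{C}^2)$, where $\langle\theta_k\otimes1,\,1\otimes\theta_k\rangle$ carries the two-dimensional $\mathfrak{sl}_2(\mathbb{C})$-module and the remaining rank-one summands are annihilated by $e$ and $f$; the Clifford relations make the contributions of distinct coordinates commute, so the summands assemble into an $\mathfrak{sl}_2(\mathbb{C})$-triple. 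Equivariance is again the $\mathrm{GL}(V)$-invariance of $\sum_k\theta_k\otimes\xi_k\in V\otimes V^\ast$ together with the functoriality of $e_\bullet$ and $i_\bullet$, and the same weight-space input on the total-degree-$2i$ part produces the $G$-equivariant injection $\wedge^{i-1}(V)\otimes\wedge^{i+1}(V)\hookrightarrow\wedge^{i}(V)\otimes\wedge^{i}(V)$ (vacuous once $i+1>\dim V$).

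I expect the only step demanding real care to be the verification of the $\mathfrak{sl}_2(\mathbb{C})$-relations in the exterior case, because of the sign bookkeeping on $\Lambda(V)\otimes\Lambda(V)$; passing to the rank-one model keeps this routine. Everything else is formal: functoriality of multiplication and contraction yields the $G$-equivariance, and Weyl's theorem together with the elementary fact that a raising operator in a finite-dimensional $\mathfrak{sl}_2(\mathbb{C})$-module is injective on all negative weight spaces yields the injectivity. One could instead deduce the existence of these $\mathfrak{sl}_2(\mathbb{C})$-actions from the hard Lefschetz results proved later for the balanced tensor-product sequences---cf.\ Lemma~\ref{HL}, which is exactly the translation of hard Lefschetz into such an $\mathfrak{sl}_2(\mathbb{C})$-action---but the construction above is self-contained and already writes the embeddings down explicitly, as the corollary following Theorem~\ref{main} asserts.
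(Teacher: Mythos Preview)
Your argument is correct, but it is not the proof the paper gives at this point. The paper's proof of Proposition~\ref{Prop1} is a one-line reduction: since $G$ acts through $\mathrm{GL}(V)\cong\mathrm{GL}_n(\mathbb{C})$, it suffices to treat $G=\mathrm{GL}_n(\mathbb{C})$, and there the containment $\mathrm{Sym}^{i-1}\otimes\mathrm{Sym}^{i+1}\subset\mathrm{Sym}^i\otimes\mathrm{Sym}^i$ (and the exterior analogue) follows from Pieri's rule by comparing multiplicities of irreducibles. What you have written is instead the explicit $\mathfrak{sl}_2$-triple/hard Lefschetz construction that the paper carries out later in Section~\ref{section:eqKah} (your operators $e,f,h$ are precisely the $L,F,h$ of Theorems~\ref{kah} and~\ref{kah2}, and your injectivity-on-negative-weights argument is exactly Lemma~\ref{HL}). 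Your route has the advantage of producing the embedding maps explicitly and of being self-contained, which is exactly what the Corollary following Theorem~\ref{main} highlights; the paper's Pieri argument is shorter and non-constructive, serving here only to establish the existence statement before the machinery of Section~\ref{section:eqKah} is set up.
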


\begin{proof}
Suppose $\dim(V)=n$, it suffices to show that the proposition holds for $G=\operatorname{GL}_{n}(\mathbb{C)}$ and $V=\mathbb{C}^{n}$, the natural representation of $G=\operatorname{GL}_{n}(\mathbb{C)}$, which follows from the classical Pieri's formula, see \cite[Proposition 15.25, Exercise 15.32 and Exercise 15.33]{fulton2013representation}.
\end{proof}

In fact, much more is true.

\begin{thm} \label{Lie}
Let $\Lambda^+$ denote the set of the dominant weights of $GL_{n}(\mathbb{C)}$, and pick any sequence $\{\lambda_{i}\}$ in $\Lambda^+$ equi-distributed on a line, then the sequence of representations $\{L(\lambda_{i})\}$ is equivariantly log-concave, where $L(\lambda_{i})$ is the finite-dimensional irreducible module with highest weight $\lambda_{i}$.
\end{thm}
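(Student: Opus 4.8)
The plan is to unwind the definition of equivariant log-concavity into a statement about tensor multiplicities and then recognize it as an instance of the Schur log-concavity of Littlewood--Richardson coefficients. Since $\{\lambda_i\}$ is equidistributed on a line we have $\lambda_{i-1}+\lambda_{i+1}=2\lambda_i$ for every $i$, and the claim is that $L(\lambda_{i-1})\otimes L(\lambda_{i+1})$ is a subrepresentation of $L(\lambda_i)\otimes L(\lambda_i)$. Because the rational representations of $GL_n(\mathbb{C})$ are completely reducible, ``being a subrepresentation'' is equivalent to domination of multiplicities: writing $c^{\pi}_{\mu\nu}=\dim\operatorname{Hom}_{GL_n}\!\big(L(\pi),L(\mu)\otimes L(\nu)\big)$, I need
\[
c^{\pi}_{\lambda_{i-1},\lambda_{i+1}}\ \le\ c^{\pi}_{\lambda_i,\lambda_i}\qquad\text{for every dominant }\pi .
\]
Tensoring the three weights $\lambda_{i-1},\lambda_i,\lambda_{i+1}$ by a common power of $\det$ (which changes none of the relevant Hom-spaces and preserves the relation $\lambda_{i-1}+\lambda_{i+1}=2\lambda_i$) I may assume they are partitions, so the $c^{\pi}$ become genuine Littlewood--Richardson coefficients and the displayed inequality is exactly the assertion that the difference of Schur polynomials $s_{\lambda_i}^{2}-s_{\lambda_{i-1}}s_{\lambda_{i+1}}$ is Schur-positive.

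Next I would invoke the Schur log-concavity theorem of Lam--Postnikov--Pylyavskyy \cite{lam2007schur}: when $\lambda_{i-1},\lambda_i,\lambda_{i+1}$ are partitions on a line with $\lambda_i$ the midpoint, $s_{\lambda_i}^{2}-s_{\lambda_{i-1}}s_{\lambda_{i+1}}$ is a nonnegative integer combination of Schur functions. Reading this back through the equivalence above yields, for each $i$, a genuine $GL_n(\mathbb{C})$-equivariant injection $L(\lambda_{i-1})\otimes L(\lambda_{i+1})\hookrightarrow L(\lambda_i)^{\otimes 2}$, which is precisely the required equivariant log-concavity. Keeping track of which Littlewood--Richardson tableaux (or hives) contribute would moreover make this injection reasonably explicit, in the same spirit as the $\operatorname{Sym}$ and $\wedge$ cases handled via Pieri in Proposition \ref{Prop1}.

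The main obstacle is entirely in the combinatorial input: one must ensure the Schur log-concavity statement is available for an arbitrary line direction $\delta=\lambda_{i+1}-\lambda_i$, not merely the ``single box'' directions $\pm e_k$, subject only to the constraint that $\lambda_{i-1},\lambda_i,\lambda_{i+1}$ stay dominant. If a sufficiently general form is not directly quotable, the fallback is to prove $s_{\lambda_i}^2-s_{\lambda_{i-1}}s_{\lambda_{i+1}}$ Schur-positive by hand: realize $c^{\pi}_{\lambda_i-t\delta,\ \lambda_i+t\delta}$ as the number of integer points in a hive polytope $P_t$ whose facet inequalities are fixed and whose boundary data depend affinely on $t$, note that the hive-triangle reflection $\sigma$ exchanges $P_{-1}$ and $P_{1}$, fixes $P_0$, and preserves the integer lattice, and then promote the averaging map $x\mapsto\tfrac12(x+\sigma x)\colon P_{1}\to P_0$ to an injection on lattice points. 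Securing that last injectivity is the delicate step, and is where the real work of a self-contained argument would concentrate.
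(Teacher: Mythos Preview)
Your approach is essentially the same as the paper's: reduce equivariant log-concavity to Schur positivity of $s_{\lambda_i}^{2}-s_{\lambda_{i-1}}s_{\lambda_{i+1}}$ and invoke the Lam--Postnikov--Pylyavskyy theorem (stated here as Theorem~\ref{Sch}). Your concern about whether the result covers an arbitrary line direction $\delta$ is unfounded, since Theorem~\ref{Sch} applies to \emph{any} pair of partitions $\lambda,\mu$ with $\lambda+\mu$ even---which is automatic from $\lambda_{i-1}+\lambda_{i+1}=2\lambda_i$---so the fallback hive argument is unnecessary.
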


Theorem \ref{Lie} is a direct corollary of the following remarkable Schur log-concavity theorem in \cite{lam2007schur}, which is a special case of Okounkov's log-concavity conjecture for the Littlewood--Richardson coefficients, see \cite{okounkov2003would}, since the Littlewood--Richardson coefficients are the structure coefficients of tensor products of finite-dimensional representations of general linear groups.

\begin{thm} \cite[Theorem 12, weak version]{lam2007schur} \label{Sch} 
For two partitions $\lambda=\left(\lambda_{1}, \lambda_{2}, \ldots, \lambda_{n}\right)$ and $\mu=\left(\mu_{1}, \mu_{2}, \ldots, \mu_{n}\right)$, suppose $\lambda+\mu$ has only even parts, and let $s_{\lambda}$ be the corresponding Schur polynomial, then
$s_{\frac{\lambda+\mu}{2}}^2 - s_{\lambda} s_{\mu}$ are Schur non-negative, which means that it is a non-negative linear combination of Schur polynomials.
\end{thm}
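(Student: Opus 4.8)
This is a theorem of \cite{lam2007schur}; here is the route I would take. Working in the ring of symmetric functions, where the Schur functions $\{s_{\nu}\}$ form a basis (the eventual specialisation to $n$ variables only deletes the terms indexed by partitions with more than $n$ rows), and noting that $\kappa:=\tfrac12(\lambda+\mu)$ is a partition because every part of $\lambda+\mu$ is even, I would first expand both sides in Littlewood--Richardson coefficients, $s_{\kappa}^{2}=\sum_{\nu}c^{\nu}_{\kappa\kappa}\,s_{\nu}$ and $s_{\lambda}s_{\mu}=\sum_{\nu}c^{\nu}_{\lambda\mu}\,s_{\nu}$. The assertion then becomes \emph{equivalent} to the numerical inequality
\[
c^{\nu}_{\lambda\mu}\ \le\ c^{\nu}_{\kappa\kappa}
\qquad\text{for every partition }\nu ,
\]
and since $c^{\nu}_{\lambda\mu}=c^{\nu}_{\mu\lambda}$ this says exactly that the Littlewood--Richardson coefficient, as a function of the pair $(\alpha,\beta)$ running over $\{(\alpha,\beta):\alpha+\beta=\lambda+\mu\}$, never exceeds its value at the midpoint.

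The plan is to prove this by \emph{telescoping in the Schur order on products}. I would connect $(\lambda,\mu)$ to the balanced pair $(\kappa,\kappa)$ through a chain
\[
(\lambda,\mu)=(\alpha^{0},\beta^{0}),\ (\alpha^{1},\beta^{1}),\ \dots,\ (\alpha^{N},\beta^{N})=(\kappa,\kappa)
\]
of pairs of partitions of fixed total degree, each obtained from the previous by a small ``box move'' in the balancing direction --- so chosen, following the Schur-order machinery of \cite{lam2007schur}, that consecutive products are comparable, $s_{\alpha^{k}}s_{\beta^{k}}\preceq_{s}s_{\alpha^{k+1}}s_{\beta^{k+1}}$. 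The evenness of $\lambda+\mu$ is precisely what allows the chain to be driven all the way down to $(\kappa,\kappa)$, the unique pair on which the relevant ``imbalance'' potential vanishes. Since the differences $s_{\alpha^{k+1}}s_{\beta^{k+1}}-s_{\alpha^{k}}s_{\beta^{k}}$ telescope, everything reduces to a single box move.

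For that single step I would argue with non-intersecting lattice paths. By the dual Jacobi--Trudi identity $s_{\gamma}=\det\!\big(e_{\gamma'_{i}-i+j}\big)$ and the Lindström--Gessel--Viennot lemma, $s_{\gamma}$ is the weight generating function of families of non-intersecting lattice paths with prescribed source and sink sets; hence $s_{\alpha}s_{\beta}$ (and likewise the product after the move) is the generating function of \emph{pairs} of such families drawn in a single grid, the box move displacing only a couple of the endpoints and pushing them closer together. One then builds a weight-preserving injection from the ``before'' configurations into the ``after'' configurations by swapping tails at the crossing that the displaced endpoints force, and checks that the configurations left outside the image assemble into disjoint unions of skew shapes, whose skew Schur functions are Schur-positive by the Littlewood--Richardson rule; this yields Schur-positivity of the one-step difference. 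The smallest instance, involving only two rows, is nothing but the Pieri computation $h_{p}h_{q}=\sum_{j=0}^{\min(p,q)}s_{(p+q-j,\,j)}$ already used to prove Proposition \ref{Prop1}.

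The main obstacle is exactly this last step: to make the tail-swap well defined and genuinely injective --- equivalently, in Littlewood--Richardson language, to produce for every $\nu$ an injection of the tableaux computing $c^{\nu}_{\alpha\beta}$ into those computing $c^{\nu}_{\alpha'\beta'}$, compatibly along the chain --- so that the leftover is \emph{visibly} a non-negative combination of Schur functions rather than an indefinite difference; this is the combinatorial heart of \cite{lam2007schur}. It is tempting to bypass it with a hive-polytope and Minkowski-sum argument in the spirit of \cite{okounkov2003would}: the hive polytope $H^{\nu}_{\lambda\mu}$ whose lattice points count $c^{\nu}_{\lambda\mu}$ depends affine-linearly on the boundary data, so $\tfrac12\big(H^{\nu}_{\lambda\mu}+H^{\nu}_{\mu\lambda}\big)\subseteq H^{\nu}_{\kappa\kappa}$, and $H^{\nu}_{\lambda\mu}$ and $H^{\nu}_{\mu\lambda}$ carry equally many lattice points; but passing from this containment to $c^{\nu}_{\lambda\mu}\le c^{\nu}_{\kappa\kappa}$ requires a discrete Brunn--Minkowski estimate that is not available at the needed level of generality, which is why the explicit combinatorial injection --- working only in the even case --- seems unavoidable.
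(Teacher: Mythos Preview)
The paper does not prove this statement at all: Theorem~\ref{Sch} is quoted as a black box from \cite{lam2007schur}, and immediately afterwards the authors remark that the proof there ``relies on a deep result in \cite{haiman1993hecke}, which in turn relies on the Kazhdan--Lusztig conjecture.'' So the benchmark you should be comparing against is not a self-contained argument in this paper but the Lam--Postnikov--Pylyavskyy proof, whose engine is Haiman's immanant/Hecke-algebra positivity, an algebraic and decidedly non-bijective input.

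Your sketch takes a genuinely different route, and the gap is exactly where you locate it yourself. Reducing to $c^{\nu}_{\lambda\mu}\le c^{\nu}_{\kappa\kappa}$ is fine, and one can indeed link $(\lambda,\mu)$ to $(\kappa,\kappa)$ by elementary moves; but the assertion that each one-box ``balancing'' move produces a Schur-positive difference, provable by a Lindstr\"om--Gessel--Viennot tail-swap, is not something \cite{lam2007schur} establishes, nor is it known by any such direct injection. LPP's cell-transfer machinery gives \emph{monomial} positivity of certain differences combinatorially, but upgrading that to \emph{Schur} positivity is precisely the step that forces them through Haiman's theorem; your ``combinatorial heart of \cite{lam2007schur}'' description misidentifies what that paper actually does. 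In short, the proposed tail-swap injection is the whole problem, not a routine verification --- a purely bijective proof of this Schur log-concavity is, as far as I know, still open --- so as written the sketch does not close, and the paper's own commentary (pointing to Kazhdan--Lusztig) is a strong hint that an elementary lattice-path argument should not be expected to work.
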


Theorem \ref{Lie} follows from Theorem \ref{Sch} since Schur polynomials are the characters of the corresponding highest weight irreducible representations of $\operatorname{GL}_{n}(\mathbb{C)}$\footnote{Another log-concave property of Schur polynomials is that coefficients of monomials, which are the \emph{weight multiplicities} of the Schur module, are log-concave in the root directions, see \cite{huh2022logarithmic}.}. The proof of Theorem \ref{Sch} relies on a deep result in \cite{haiman1993hecke}, which in turn relies on the Kazhdan--Lusztig conjecture for the character of simple highest weight modules of $\mathfrak{sl}_{n}(\mathbb{C})$. This is to our surprise since Theorem \ref{Lie} concerns only finite-dimensional representations. We note that giving a representation-theoretic proof and not using tools in symmetric function theory might generalize Theorem \ref{Lie} to other types.

However, giving explicit maps to realize the log-concavity in Theorem \ref{Lie} might be difficult. Even in the simplest example of $\mathfrak{g}=\mathfrak{sl}_{2}(\mathbb{C})$, by the classical Clebsch--Gordan formula, we have the explicit decomposition: 
\begin{equation*}
V(k) \otimes V(l) \cong  V(k+l) \oplus V(k+l-2) \oplus \cdots \oplus V(k-l) \cong (V(k+1) \otimes V(l-1)) \oplus V(k-l)
\end{equation*}
for $k \geq l$, where $V(k)$ is the irreducible $(k+1)$-dimensional representation. But it would be not so easy to give the explicit injections since one has to match the highest weight vectors in the tensor products. Since the exterior powers and the symmetric powers are the two fundamental constructions of representations known as Schur functors on the category of vector spaces, a natural but basic question is: can we give explicit maps to realize the log-concavity in Proposition \ref{Prop1} for $S_{n}$?  

\section{Equivariant K\"ahler packages}\label{section:eqKah}

\subsection{Polynomial rings}

Consider the polynomial ring $\mathbb{R}\left[t_{1}, \ldots, t_{n}\right]$
as a graded (we set each $t_{j}$ of degree $1$ for convenience) representation of $S_{n}$, where $S_{n}$ acts by permuting the indices. Geometrically, it is the cohomology $H^{*}(\left(\mathbb{C} \mathbb{P}^{\infty}\right)^{n}, \mathbb{R})$ of the classifying space $B (S^{1})^{n}=\left(\mathbb{C} \mathbb{P}^{\infty}\right)^{n}$ (when doubling the degrees) by the K\"unneth formula and $H^{*}\left(\mathbb{C} \mathbb{P}^{\infty} , \mathbb{R}\right)=\mathbb{R}\left[t\right]$. $S_{n}$ acts on $\left(\mathbb{C} \mathbb{P}^{\infty}\right)^{n}$ by permuting the factors hence acts on the cohomology. The Betti number $b_{2i}$ of $\left(\mathbb{C} \mathbb{P}^{\infty}\right)^{n}$
is equal to the binomial coefficient $\binom{n+i-1}{i}$, which forms a log-concave sequence for fixed $n$. We will construct an equivariant K\"ahler package in this subsection to show that $H^{*}(\left(\mathbb{C} \mathbb{P}^{\infty}\right)^{n}, \mathbb{R})$ is equivariantly log-concave for the $S_{n}$-action. Actually, it turns out that $H^{*}(\left(\mathbb{C} \mathbb{P}^{\infty}\right)^{n}, \mathbb{R})$ is \emph{strongly equivariantly log-concave} introduced in \cite{matherneequivariant}. 

Fix a pair of natural numbers $(n,m)$, consider the graded $\mathbb{R}$-vector space $$
H_{n,m}=\bigoplus_{i = 0}^{m} H_{n,m}^{-m+2i} \text{ , with }
 H_{n,m}^{-m+2i}:=D^{i} \otimes R^{m-i},$$ where $D^{i}$ denotes the degree-$i$ piece of the polynomial ring $D=\mathbb{R}\left[d_{1}, \ldots, d_{n}\right]$, $R^{m-i}$ denotes the degree-($m-i$) piece of the polynomial ring $R=\mathbb{R}\left[x_{1}, \ldots, x_{n}\right]$, $S_{n}$ acts on $D$ and $R$ by permuting the indices hence acts on $H_{n,m}$. 
 
We define a pairing on $H_{n,m}$ by setting
\begin{equation} \label{pair}
\left\langle d \otimes f, d^{\prime} \otimes f^{\prime}\right\rangle:=\left(d, f^{\prime}\right)\left(d^{\prime}, f\right)
\end{equation}
and extending linearly, where $(d, f^{\prime})=(d \cdot f^{\prime})(0) $ is the number one gets by interpreting the $d_{i}$'s as differential operators $\frac{\partial}{\partial x_{i}}$ acting on $f^{\prime}$ and evaluate $x_{i}$'s at $0$ in the result. Intuitively, one should think that this pairs “homology” with “cohomology”.

We define 
$$\operatorname{L}: H_{n,m}^{i} \longrightarrow H_{n,m}^{i+2}$$
to be the linear map 
$$
L:=\sum_{i=1}^{n} d_{i} \otimes \frac{\partial}{\partial x_{i}},
$$
where $d_{i}$ acts by multiplication.

One of our main results is that $(H_{n,m},\langle-,-\rangle, L)$ satisfies the K\"ahler package.

\begin{thm} \label{kah}
For any pair of natural numbers $(n,m)$, we have
\begin{itemize}
\item[(a)] (PD) The bilinear pairing 
$$\langle-,-\rangle: H_{n,m} \times H_{n,m} \longrightarrow \mathbb{R}$$
is an $S_{n}$-equivariant symmetric graded bilinear form, which is non-degenerate;
\item[(b)] (HL) $\operatorname{L}: H_{n,m}^{i} \longrightarrow H_{n,m}^{i+2}$
is an $S_{n}$-equivariant Lefschetz operator satisfying the hard Lefschetz theorem, i.e., for all $i \geq 0$, 
$$L^{i}: H_{n,m}^{-i} \longrightarrow H_{n,m}^{i}$$
is an isomorphism;
\item[(c)] (HR) For all $0 \leq i \leq m/2$, the bilinear form
$$(a, b)_{L}^{-m+2i}=\left\langle a, L^{m-2i} b\right\rangle: H_{n,m}^{-m+2i} \times H_{n,m}^{-m+2i} \longrightarrow \mathbb{R}$$
is $S_{n}$-equivariant and $(-1)^{i}$-definite on the primitive subspace $P_{L}^{-m+2i}=\ker\left(L^{m-2i+1}\right) \cap H^{-m+2i}$.
\end{itemize}
\end{thm}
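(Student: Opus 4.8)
The plan is to recognize $(H_{n,m}, \langle-,-\rangle, L)$ as (a graded piece of) the cohomology of a product of projective spaces, so that the entire Kähler package transports from classical Hodge theory, and then to check by hand that all three structures ($\langle-,-\rangle$, $L$, and the Lefschetz forms) are $S_n$-equivariant. More precisely, I would first set up an isomorphism of graded vector spaces $H_{n,m} \cong \bigoplus_{i=0}^m H^{2i}\bigl((\mathbb{CP}^{m})^n\bigr)$-in-suitable-bidegree — the point being that $D^i \otimes R^{m-i}$ matches the monomials of degree $i$ in the $d$'s paired with monomials of degree $m-i$ in the $x$'s, and under the substitution $d_k \leftrightarrow$ hyperplane class, $\partial/\partial x_k \leftrightarrow$ contraction, the operator $L = \sum_k d_k \otimes \partial/\partial x_k$ becomes (up to scalar) multiplication by $\sum_k h_k$, the class of an ample divisor on $(\mathbb{CP}^m)^n$. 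Equivalently, and perhaps more cleanly: $R = \mathbb{R}[x_1,\dots,x_n]$ with the $x$'s of degree $1$ is $H^*$ of $(\mathbb{CP}^\infty)^n$, $L$ is the Lefschetz operator attached to the ample-type class $\sum d_i \otimes \partial_{x_i}$, and restricting to the graded piece of total degree $m$ in each tensor factor realizes the hard-Lefschetz window of $(\mathbb{CP}^m)^n$. Either way, the classical Hodge–Riemann relations for the smooth projective variety $(\mathbb{CP}^m)^n$ with the ample class $\sum_k h_k$ give (b) and (c) at the level of vector spaces.

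Next I would verify (a): that the pairing (\ref{pair}) is symmetric, graded, non-degenerate, and $S_n$-equivariant. Symmetry is immediate from the definition $\langle d\otimes f, d'\otimes f'\rangle = (d,f')(d',f')$ wait — $(d,f')(d',f)$, which is manifestly symmetric under swapping $(d,f) \leftrightarrow (d',f')$. Gradedness is the observation that $(d,f')$ vanishes unless $\deg d = \deg f'$. Non-degeneracy follows because the apolar-type pairing $(\,\cdot\,,\,\cdot\,): D^i \times R^i \to \mathbb{R}$ is a perfect pairing (the monomial basis of $D^i$ is dual, up to nonzero factorials, to the monomial basis of $R^i$), hence its "product" pairing on $D^i \otimes R^{m-i}$ against $D^{m-i}\otimes R^i$ is perfect. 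Equivariance is the statement that differentiation and evaluation at $0$ commute with simultaneous permutation of indices, which is a one-line check. The key compatibility $\langle La, b\rangle = \langle a, Lb\rangle$ making $L$ a Lefschetz operator I would check directly from the definitions of $L$ and (\ref{pair}), using that $d_k$ (multiplication) and $\partial/\partial x_k$ are adjoint under the apolar pairing.

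Finally, for the equivariance in (b) and (c): $L = \sum_{i=1}^n d_i \otimes \partial/\partial x_i$ is visibly $S_n$-invariant since the sum is over all indices, so each power $L^j$ is equivariant and hence $L^i\colon H_{n,m}^{-i}\to H_{n,m}^i$ is a map of $S_n$-representations; being an isomorphism (from the classical HL) it is an isomorphism of representations. The Lefschetz form $(a,b)_L^{-m+2i} = \langle a, L^{m-2i}b\rangle$ is then equivariant as a composite of equivariant maps, and its $(-1)^i$-definiteness on $P_L^{-m+2i}$ is inherited verbatim from the Hodge–Riemann relations on $(\mathbb{CP}^m)^n$ — note that definiteness is a property of the real bilinear form and is unaffected by the group action. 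I would also remark that $P_L^{-m+2i}$ is automatically an $S_n$-subrepresentation, being the kernel of the equivariant map $L^{m-2i+1}$.

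The main obstacle I anticipate is pinning down the precise dictionary between $(H_{n,m}, L, \langle-,-\rangle)$ and the cohomology of $(\mathbb{CP}^m)^n$ with its ample class — in particular getting the bigradings, the normalization constants (factorials from repeated differentiation), and the direction of the duality ($d$'s as "homology", $x$'s as "cohomology") to line up so that $L$ genuinely corresponds to multiplication by $\sum_k h_k$ and (\ref{pair}) to the Poincaré pairing. Once that identification is clean, hard Lefschetz and Hodge–Riemann are not re-proved but simply imported, and the only genuinely new content is the (easy but essential) observation that every object in sight is built $S_n$-equivariantly because $L$ and $\langle-,-\rangle$ are symmetric in the index $i$. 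An alternative, more self-contained route — which I would keep in reserve in case the geometric identification is awkward to state precisely at the level of graded pieces — is to exhibit the $\mathfrak{sl}_2(\mathbb{R})$-triple directly: take $e = L$, $h$ acting by the degree, and $f = \sum_i x_i \otimes \partial/\partial d_i$ (the evident "adjoint" suggested by the paper's remark that the adjoint of $L$ is explicit), check the bracket relations $[h,e]=2e$, $[h,f]=-2f$, $[e,f]=h$ by a direct computation on tensors of monomials, invoke Lemma \ref{HL} for hard Lefschetz, and then deduce Hodge–Riemann from the $\mathfrak{sl}_2$-module structure together with the primitive decomposition (\ref{pride}) and the sign computation on each primitive summand.
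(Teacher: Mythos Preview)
Your treatment of part (a) and your ``alternative'' route for (b) --- exhibit the $\mathfrak{sl}_2$-triple $(L,F,h)$ with $F=\sum_i \partial/\partial d_i \otimes x_i$ and invoke Lemma~\ref{HL} --- are correct and are exactly what the paper does. The self-adjointness check $\langle La,b\rangle=\langle a,Lb\rangle$ and the $S_n$-equivariance observations are also fine.

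The genuine gap is your geometric model for (c). The space $H_{n,m}$ is \emph{not} (a graded piece of) $H^*\bigl((\mathbb{CP}^m)^n,\mathbb{R}\bigr)$: already for $n=m=2$ one has $\dim H_{2,2}=3+4+3=10$ while $\dim H^*((\mathbb{CP}^2)^2)=9$. So there is no identification under which $L$ becomes cup product with $\sum_k h_k$ on a single product of projective spaces, and your import of Hodge--Riemann from $(\mathbb{CP}^m)^n$ does not go through. The paper's fix is to refine the $\mathfrak{sl}_2$-action to an action of $\bigoplus^n \mathfrak{sl}_2(\mathbb{R})$ via $e_k=d_k\otimes\partial_{x_k}$, $f_k=\partial_{d_k}\otimes x_k$, which decomposes
\[
H_{n,m}\;\cong\;\bigoplus_{|\alpha|=m} V(\alpha_1)\otimes\cdots\otimes V(\alpha_n)\;\cong\;\bigoplus_{|\alpha|=m} H^*\bigl(\mathbb{CP}^{\alpha_1}\times\cdots\times\mathbb{CP}^{\alpha_n},\mathbb{R}\bigr),
\]
orthogonally for $\langle-,-\rangle$, with the diagonal $L$ acting on each summand as multiplication by an ample class and $\langle-,-\rangle$ restricting to the Poincar\'e pairing up to a positive scalar. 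Hodge--Riemann is then imported summand by summand. Your fallback for (c) (``deduce HR from the $\mathfrak{sl}_2$-module structure together with the primitive decomposition and a sign computation'') is not an argument: the $\mathfrak{sl}_2$-structure alone determines only the \emph{signature} of the Lefschetz form via the Betti numbers, not its definiteness on primitives, so some genuine positivity input --- here, classical HR on products of projective spaces --- is required.
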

 
\begin{proof}
 It is clear that $\langle-,-\rangle$ is a symmetric bilinear form. It is graded (i.e.,\\ $\left\langle H_{n,m}^{i}, H_{n,m}^{j}\right\rangle=0$ if $i \neq-j$) for degree reasons. It is $S_{n}$-equivariant since $\left(d, f^{\prime}\right)=\left(d \cdot f^{\prime}\right)(0)$ is $S_{n}$-equivariant. Finally, it is non-degenerate since \\
 $\left\{d^{\alpha} \otimes \frac{x^{\beta}}{\beta !} \mid 
|\alpha|=i, |\beta |=m-i\right\}$ and $\left\{d^{\beta^{\prime}} \otimes \frac{x^{\alpha^{\prime}}}{\alpha^{\prime} !} \mid |\beta^{\prime}|=m-i, |\alpha^{\prime} |=i\right\} $ form a dual\footnote{Recall the homology $H_{*}(\mathbb{C} \mathbb{P}^{\infty}, \mathbb{Z})$ of the classifying space $B S^{1}=\mathbb{C} \mathbb{P}^{\infty}$ is the \emph{divided polynomial algebras}, see \cite{hatcher2005algebraic}.} basis in $H_{n,m}^{-m+2i}$ and $H_{n,m}^{m-2i}$, where $d^{\alpha}$ (similarly for $x^{\beta}$) denotes the monomial $d_{1}^{\alpha_{1}} \cdots d_{n}^{\alpha_{n}}$ for 
$\alpha=(\alpha_{1}, \cdots ,\alpha_{n})$,  $|\alpha|:=\sum_{i=1}^{n} \alpha_{i}$ and $\beta !:=\beta_{1} ! \cdots \beta_{n} !$ for $\beta=(\beta_{1}, \cdots ,\beta_{n})$. That is,
\begin{equation} \label{dual}
\langle d^{\alpha} \otimes \frac{x^{\beta}}{\beta !},d^{\beta^{\prime}} \otimes \frac{x^{\alpha^{\prime}}}{\alpha^{\prime} !}\rangle= \begin{cases}1, & \text { if } \alpha=\alpha^{\prime}, \text { and } \beta=\beta^{\prime},\\ 0, & \text { otherwise, }\end{cases}
\end{equation}
which completes the proof of part (a).

Now we turn to the proof of part (b). We have 
$$\begin{aligned} &\left\langle L(d \otimes f), d^{\prime} \otimes f^{\prime}\right\rangle \\=&\left\langle\sum_{i=1}^{n} d_{i} d \otimes \frac{\partial}{\partial x_{i}} \cdot f, d^{\prime} \otimes f^{\prime}\right\rangle \\=& \sum_{i=1}^{n}\left(d_{i} d, f^{\prime}\right)\left(d^{\prime}, \frac{\partial}{\partial x_{i}} \cdot f\right) \\=& \sum_{i=1}^{n}\left(d, \frac{\partial}{\partial x_{i}} \cdot f^{\prime}\right)\left(d_{i} d^{\prime}, f\right) \\=&\left\langle d \otimes f, \sum_{i=1}^{n} d_{i} d^{\prime} \otimes \frac{\partial}{\partial x_{i}} \cdot f^{\prime}\right\rangle \\
=&\left\langle d \otimes f, L\left(d^{\prime} \otimes f^{\prime}\right)\right\rangle, \end{aligned}$$
which shows that $\operatorname{L}$ is a Lefschetz operator. It is clear that $\operatorname{L}$ is $S_{n}$-equivariant. To show that $\operatorname{L}$ satisfies hard Lefschetz, we define the lowering operator\\
$\operatorname{F}: H_{n,m}^{i} \longrightarrow H_{n,m}^{i-2}$ as $$\operatorname{F}:=\sum_{i=1}^{n}  \frac{\partial}{\partial d_{i}} \otimes x_{i}.$$
By direct computation,
$$\begin{aligned}
& \operatorname{L} \operatorname{F}-\operatorname{F} \operatorname{L} \\
=&\left(\sum_{i=1}^{n} d_{i} \otimes \frac{\partial}{\partial x_{i}}\right)\left(\sum_{j=1}^{n} \frac{\partial}{\partial d_{j}} \otimes x_{j}\right)-\left(\sum_{j=1}^{n} \frac{\partial}{\partial d_{j}} \otimes x_{j}\right)\left(\sum_{i=1}^{n} d_{i} \otimes \frac{\partial}{\partial x_{i}}\right) \\
=& \sum_{i=1}^{n} \sum_{j=1}^{n} (d_{i}  \frac{\partial}{\partial d_{j}} \otimes \frac{\partial}{\partial x_{i}}  x_{j}-\frac{\partial}{\partial d_{j}}  d_{i} \otimes x_{j}  \frac{\partial}{\partial x_{i}})\\
=& \sum_{i=1}^{n} (d_{i}  \frac{\partial}{\partial d_{i}} \otimes \operatorname{ id }-\operatorname{ id } \otimes x_{i}  \frac{\partial}{\partial x_{i}}),
\end{aligned}$$
the last equality follows from the Leibniz rule. 
We define 
$\operatorname{h}: H_{n,m}^{i} \longrightarrow H_{n,m}^{i}$ to be the linear map 
$$\operatorname{h}:=\sum_{i=1}^{n}\left(d_{i}  \frac{\partial}{\partial d_{i}} \otimes \operatorname{ id }-\operatorname{ id } \otimes x_{i}  \frac{\partial}{\partial x_{i}}\right).$$ Since $\sum_{i=1}^{n}d_{i}  \frac{\partial}{\partial d_{i}}$ and $\sum_{i=1}^{n}x_{i}  \frac{\partial}{\partial x_{i}}$ are Euler operators which act on homogeneous degree-$k$ polynomials with eigenvalue $k$, thus $\operatorname{h} \cdot v=i v$ for all $v \in H_{n,m}^{i}$ as required in Lemma \ref{HL}.
Finally, we check the $\mathfrak{s l_{2}}$-relations:
by definition, $\operatorname{L} \operatorname{F}-\operatorname{F} \operatorname{L}=\operatorname{h}$. We compute
\begin{equation*} 
\begin{aligned}
& \operatorname{h} \operatorname{L}-\operatorname{L} \operatorname{h} \\
=& \sum_{i=1}^{n}\left(d_{i}  \frac{\partial}{\partial d_{i}} \otimes \operatorname{ id }-\operatorname{ id } \otimes x_{i}  \frac{\partial}{\partial x_{i}}\right)\sum_{j=1}^{n} d_{j} \otimes \frac{\partial}{\partial x_{j}}-\sum_{j=1}^{n} d_{j} \otimes \frac{\partial}{\partial x_{j}}   \sum_{i=1}^{n}\left(d_{i}  \frac{\partial}{\partial d_{i}} \otimes \operatorname{ id }-\operatorname{ id } \otimes x_{i}  \frac{\partial}{\partial x_{i}}\right) \\
=& \sum_{i=1}^{n} \sum_{j=1}^{n}\left(d_{i}  \frac{\partial}{\partial d_{i}}  d_{j} \otimes \frac{\partial}{\partial x_{j}}-d_{j} \otimes x_{i}  \frac{\partial}{\partial x_{i}}  \frac{\partial}{\partial x_{j}}-d_{j}  d_{i}  \frac{\partial}{\partial d_{i}} \otimes \frac{\partial}{\partial x_{j}}+d_{j} \otimes \frac{\partial}{\partial x_{j}}  x_{i}  \frac{\partial}{\partial x_{i}}\right) \\
=& \sum_{i=1}^{n}\left((d_{i}  \frac{\partial}{\partial d_{i}}  d_{i}-d_{i}  d_{i}  \frac{\partial}{\partial d_{i}}) \otimes \frac{\partial}{\partial x_{i}}-d_{i} \otimes(x_{i}  \frac{\partial}{\partial x_{i}}  \frac{\partial}{\partial x_{i}}-\frac{\partial}{\partial x_{i}}  x_{i}  \frac{\partial}{\partial x_{i}})\right) \\
=& \sum_{i=1}^{n} 2 d_{i} \otimes \frac{\partial}{\partial x_{i}}= 2 \operatorname{L},
\end{aligned}
\end{equation*}
and similar computations show that $\operatorname{h} \operatorname{F}-\operatorname{F} \operatorname{h}=-2\operatorname{F}$. By Lemma \ref{HL}, $\operatorname{L}$ satisfies hard Lefschetz, which completes the proof of part (b).

We are now turning to the proof of part (c). The Lefschetz form $(a, b)_{L}^{-m+2i}=\left\langle a, \operatorname{L}^{m-2i} b\right\rangle$ on $H_{n,m}^{-m+2i}$ is $S_{n}$-equivariant since $\langle-,-\rangle$ and $\operatorname{L}$ are both $S_{n}$-equivariant. It remains to prove Hodge--Riemann bilinear relations, here we use a little classical Hodge theory and we refer to Section 7 of Chapter 0 of \cite{griffiths2014principles} for the reader. 
Let $V(k)$ be the irreducible $(k+1)$-dimensional representation of the Lie algebra $\mathfrak{s l}_{2}(\mathbb{R})$ with highest weight $k$.
The direct sum $\bigoplus^{n} \mathfrak{s l}_{2}(\mathbb{R})$ of $n$ copies of $\mathfrak{s l}_{2}(\mathbb{R})=\langle e_{i}, f_{i}, h_{i}\rangle$ act on
$$H_{n,m}=\bigoplus_{i = 0}^{m} H_{n,m}^{-m+2i} =\bigoplus_{i = 0}^{m} D^{i} \otimes R^{m-i}$$ via $$e_{i}=d_{i} \otimes \frac{\partial}{\partial x_{i}}, f_{i}=\frac{\partial}{\partial d_{i}} \otimes x_{i} \text{ , and } h_{i}=d_{i} \frac{\partial}{\partial d_{i}} \otimes \operatorname{ id }-\operatorname{ id } \otimes x_{i} \frac{\partial}{\partial x_{i}}.$$
Then our action of $\mathfrak{s l}_{2}(\mathbb{R})$ comes from the diagonal embedding $$\mathfrak{s l}_{2}(\mathbb{R}) \stackrel{\Delta}{\longrightarrow} \oplus^{n}{\mathfrak{s l}_{2}(\mathbb{R})},$$ where $$\Delta(L)=\sum_{i=1}^{n} e_{i}, \Delta\left(F\right)=\sum_{i=1}^{n} f_{i} \text{ , and } \Delta(h)=\sum_{i=1}^{n} h_{i}.$$
As a representation of $\bigoplus^{n} \mathfrak{s l}_{2}(\mathbb{R})$, or equivalently, as a representation of the enveloping algebra $U(\bigoplus^{n} \mathfrak{s l}_{2}(\mathbb{R})) \cong \bigotimes^{n} U\left(\mathfrak{s l}_{2}(\mathbb{R})\right)$, we have
$$H_{n,m} \cong \bigoplus_{\alpha=(\alpha_{1}, \cdots, \alpha_{n}), |\alpha|=m} V\left(\alpha_{1}\right) \otimes \cdots \otimes V\left(\alpha_{n}\right),$$
where each $V\left(\alpha_{i}\right)$ is an irreducible representation of the $i$-th copy of $\mathfrak{s l}_{2}(\mathbb{R})$ in $\bigoplus^{n} \mathfrak{s l}_{2}(\mathbb{R})$ and the other copies of $\mathfrak{s l}_{2}(\mathbb{R})$ in $\bigoplus^{n} \mathfrak{s l}_{2}(\mathbb{R})$ act trivially. 
Therefore, as a representation of $\mathfrak{s l}_{2}(\mathbb{R})=\langle L, F, h\rangle$, we have the decomposition into irreducible representations
\begin{equation} \label{dec}
\begin{aligned}
H_{n,m} & \cong \bigoplus_{\alpha=(\alpha_{1}, \cdots, \alpha_{n}), |\alpha|=m} V\left(\alpha_{1}\right) \otimes \cdots \otimes V\left(\alpha_{n}\right) \\
& \cong \bigoplus_{\alpha=(\alpha_{1}, \cdots, \alpha_{n}), |\alpha|=m} H^{*}\left(\mathbb{C} \mathbb{P}^{\alpha_{1}}, \mathbb{R}\right) \otimes \cdots \otimes H^{*}\left(\mathbb{C} \mathbb{P}^{\alpha_{n}},\mathbb{R})\right.\\
& \cong \bigoplus_{\alpha=(\alpha_{1}, \cdots, \alpha_{n}), |\alpha|=m} H^{*}\left(\mathbb{C} \mathbb{P}^{\alpha_{1}} \times \cdots \times \mathbb{C} \mathbb{P}^{\alpha_{n}}, \mathbb{R}\right),
\end{aligned}
\end{equation}
where the third isomorphism is by the K\"unneth formula, the second is via the action 
$$\operatorname{f_{i}}\left(t_{i}^{j}\right)=(\alpha_{i}-j+1) t_{i}^{j-1},
\operatorname{h_{i}}\left(t_{i}^{j}\right)=(2 j-\alpha_{i}) t_{i}^{j} \text{ , and } \operatorname{e_{i}}\left(t_{i}^{j}\right)=(j+1) t_{i}^{j+1}$$ 
on the cohomology of projective space $H^{*}\left(\mathbb{C} \mathbb{P}^{\alpha_{i}} , \mathbb{R}\right)=\mathbb{R}\left[t_{i}\right]/\langle t_{i}^{\alpha_{i}+1}\rangle$. We fix an isomophism such that the highest weight vector $d^{\alpha} \otimes 1 \in H_{n,m}$ of $\bigoplus^{n} \mathfrak{s l}_{2}(\mathbb{R})$ corresponds to $t^{\alpha} \in H^{*}\left(\mathbb{C} \mathbb{P}^{\alpha_{1}} \times \cdots \times \mathbb{C} \mathbb{P}^{\alpha_{n}}, \mathbb{R}\right)$. By the actions of $\operatorname{f_{i}}'s$, the lowest weight vector $1 \otimes x^{\alpha} \in H_{n,m}$ corresponds to $1 \in H^{*}\left(\mathbb{C} \mathbb{P}^{\alpha_{1}} \times \cdots \times \mathbb{C} \mathbb{P}^{\alpha_{n}}, \mathbb{R}\right)$. Therefore, the bilinear form $\langle-,-\rangle$ on $H_{n,m}$ restricts to each $H^{*}\left(\mathbb{C} \mathbb{P}^{\alpha_{1}} \times \cdots \times \mathbb{C} \mathbb{P}^{\alpha_{n}}, \mathbb{R}\right)$ as the usual Poincar\'e pairing up to a positive scalar by \eqref{dual}. 
Under this isomophism, each $e_{i}$ corresponds to multiplying an ample class on
$H^{*}\left(\mathbb{C P}^{\alpha_{i}}, \mathbb{R}\right)$. Therefore, $\operatorname{L}$ also corresponds to multiplying an ample class on $ H^{*}\left(\mathbb{C} \mathbb{P}^{\alpha_{1}} \times \cdots \times \mathbb{C} \mathbb{P}^{\alpha_{n}}, \mathbb{R}\right)$. By classical Hodge theory, Hodge--Riemann bilinear relations hold for each $H^{*}\left(\mathbb{C P}^{\alpha_{1}} \times \cdots \times \mathbb{C P}^{\alpha_{n}}, \mathbb{R}\right)$. This give Hodge--Riemann bilinear relations for $H_{n,m}$ since it is not hard to see that the decomposition \eqref{dec} is orthogonal with respect to the Lefschetz form in $H_{n,m}$ by \eqref{dual} again, which completes the proof of part (b). The proof is completed.
\end{proof}

The hard Lefschetz property immediately gives the following corollary.
\begin{cor}
$H^{*}(\left(\mathbb{C} \mathbb{P}^{\infty}\right)^{n}, \mathbb{R})=\mathbb{R}\left[t_{1}, \ldots, t_{n}\right]$ is strongly equivariantly log-concave for any $m \geq 0$:
\begin{equation} \label{str}
V^{0} \otimes V^{m} \subset V^{1} \otimes V^{m-1} \subset V^{2} \otimes V^{m-2} \subset \cdots \subset \begin{cases}V^{m / 2} \otimes V^{m / 2} & \text { if } m \text { is even } \\ V^{(m-1) / 2} \otimes V^{(m+1) / 2} & \text { if } m \text { is odd, }\end{cases}
\end{equation}
where $V^{i}$ denotes the degree-$2i$ piece of $H^{*}(\left(\mathbb{C} \mathbb{P}^{\infty}\right)^{n}, \mathbb{R})=\mathbb{R}\left[t_{1}, \ldots, t_{n}\right]$ (here each $t_{i}$ is of degree-$2$). Furthermore, the inclusions of $S_{n}$-representations are given by the operator 
$$
\operatorname{L}=\sum_{i=1}^{n} t_{i} \otimes \frac{\partial}{\partial t_{i}},$$
where $t_{i}$ acts by multiplication.
\end{cor}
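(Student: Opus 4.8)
The plan is to read off this corollary directly from the hard Lefschetz statement in Theorem~\ref{kah}(b), after identifying the relevant spaces. Fix $m\ge 0$ and write $V^{j}$ for the degree-$j$ homogeneous component of $\mathbb{R}[t_{1},\dots,t_{n}]$, with each $t_{i}$ placed in degree $2$ as in the statement. Relabeling variables gives isomorphisms of $S_{n}$-representations $V^{j}\cong D^{j}\cong R^{j}$, so that
\[
V^{i}\otimes V^{m-i}\;\cong\;D^{i}\otimes R^{m-i}\;=\;H_{n,m}^{-m+2i}
\qquad (0\le i\le m).
\]
Under these identifications (sending $d_{i}\mapsto t_{i}$ in the first tensor factor and $x_{i}\mapsto t_{i}$ in the second) the operator $L=\sum_{i=1}^{n} t_{i}\otimes\frac{\partial}{\partial t_{i}}$ of the corollary corresponds to the operator $L=\sum_{i=1}^{n} d_{i}\otimes\frac{\partial}{\partial x_{i}}$ of Theorem~\ref{kah}, and it carries $V^{i}\otimes V^{m-i}=H_{n,m}^{-m+2i}$ into $V^{i+1}\otimes V^{m-i-1}=H_{n,m}^{-m+2i+2}$. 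So it suffices to show that this map is an $S_{n}$-equivariant injection for the range of $i$ occurring in (\ref{str}).

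I would then invoke Theorem~\ref{kah}(b): $L$ is an $S_{n}$-equivariant Lefschetz operator on $H_{n,m}$ satisfying hard Lefschetz. Hard Lefschetz makes each single step of $L$ below the middle injective: for $k\le -1$ the map $L^{-k}\colon H_{n,m}^{k}\to H_{n,m}^{-k}$ is an isomorphism by assumption, and factoring it as $L^{-k}=L^{-k-1}\circ L$ with $-k-1\ge 0$ forces $L\colon H_{n,m}^{k}\to H_{n,m}^{k+2}$ to be injective. Translating back via $k=-m+2i$, the map $L\colon V^{i}\otimes V^{m-i}\hookrightarrow V^{i+1}\otimes V^{m-i-1}$ is an $S_{n}$-equivariant injection whenever $-m+2i\le -1$, that is, for all $i$ with $0\le i\le\lfloor (m-1)/2\rfloor$. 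Composing these injections yields the chain (\ref{str}): it begins at $V^{0}\otimes V^{m}$ and terminates at $V^{m/2}\otimes V^{m/2}$ for $m$ even and at $V^{(m-1)/2}\otimes V^{(m+1)/2}$ for $m$ odd (in the odd case hard Lefschetz moreover supplies a further isomorphism $H_{n,m}^{-1}\xrightarrow{\ \sim\ }H_{n,m}^{1}$). Since every arrow in the chain is $L$, the inclusions of $S_{n}$-representations in (\ref{str}) are realized by $L$, as asserted.

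In particular, for $m=2i$ the final arrow of (\ref{str}) is $V^{i-1}\otimes V^{i+1}\hookrightarrow V^{i}\otimes V^{i}$, which is exactly the plain equivariant log-concavity condition (\ref{equ}) for $\mathbb{R}[t_{1},\dots,t_{n}]$, now equipped with an explicit realizing map; thus the corollary upgrades Proposition~\ref{Prop1} for $G=S_{n}$ to the strong form of \cite{matherneequivariant}. I do not anticipate a genuine obstacle: the argument is in essence a translation of Theorem~\ref{kah}(b). The only points demanding care are the degree-doubling bookkeeping and verifying that the injectivity range $k\le -1$ is exactly what is needed to run the chain (\ref{str}) up to the middle term stated there — together with the conceptual observation that it is hard Lefschetz, not merely the fact that $L$ is a Lefschetz operator, that produces injectivity of each individual step of $L$.
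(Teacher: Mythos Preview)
Your proposal is correct and matches the paper's approach: the paper states only that ``the hard Lefschetz property immediately gives the following corollary,'' and you have carefully spelled out exactly that immediate deduction---identifying $V^{i}\otimes V^{m-i}$ with $H_{n,m}^{-m+2i}$ and using hard Lefschetz from Theorem~\ref{kah}(b) to get injectivity of each step of $L$ below the middle. There is nothing to add.
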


\begin{rmk} \label{wider}
Actually, the Lefschetz operator $\operatorname{L}$ is indeed $\operatorname{GL}_{n}(\mathbb{R)}$-equivariant, see for example, \cite{xi2012module}. In particular, $\operatorname{L}$ is equivariant for the symmetric powers of any representation of any group $G$. But the whole packages, in particular, the bilinear form $\langle-,-\rangle$ and the Lefschetz form $
(a, b)_{L}^{-m+2i}$ are only $S_{n}$-equivariant, or more generally, for which group $G$ acts on $\mathbb{R}\left[t_{1}, \ldots, t_{n}\right] = {{Sym^i(V)}}$ with $V \cong V^{*}$. A similar remark holds for the following exterior algebra case.
\end{rmk}

\begin{rmk} 
Each primitive subspace measures the difference of the adjacent tensor products of representations in \eqref{str}. The Lefschetz forms $
(a, b)_{L}^{-m+2i}$ are equivariant and Hodge--Riemann bilinear
relations tell us these equivariant Lefschetz forms are positive definite or negative definite on these differences of adjacent tensor products of representations, depending only on the graded degree. A similar remark holds for the following exterior algebra case.
\end{rmk}

\subsection{Exterior algebras}

Consider the exterior algebra $\Lambda_{\mathbb{R}}\left[\alpha_{1}, \ldots, \alpha_{n}\right]$ as a graded (each $\alpha_{j}$ is of degree 1) representation of $S_{n}$, where $S_{n}$ acts by permuting the indices hence acts on exterior powers. Geometrically, it is the cohomology $H^{*}\left((S^{1})^{n}, \mathbb{R}\right)$ of the torus $T=(S^{1})^{n}$ by the K\"unneth formula and $H^{*}\left(S^{1}, \mathbb{R}\right) =\mathbb{R}\left[\alpha\right]/(\alpha^{2})$. $S_{n}$ acts on the torus $T=(S^{1})^{n}$ by permuting the factors hence acts on the cohomology. The Betti number $b_{i}$ of the torus $T=(S^{1})^{n}$
is equal to the binomial coefficient $\binom{n}{i}$, which forms a classical log-concave sequence for fixed $n$. This time, we will construct an equivariant K\"ahler package to show that $H^{*}(\left(S^{1}\right)^{n}, \mathbb{R})$ is strongly equivariantly log-concave for the $S_{n}$ action.

Fix a pair of natural numbers $(n,m)$ with $m \leq 2n$, consider the graded $\mathbb{R}$-vector space
$$
H_{n,m}^{\prime}=\bigoplus_{i=0}^{m} (H_{n,m}^{\prime})^{-m+2 i} \text { , with } (H_{n,m}^{\prime})^{-m+2 i}:=\Lambda^{i} \otimes (\Lambda^{*})^{m-i},
$$
where $\Lambda^{i}$ denotes the degree- $i$ piece of the exterior algebra $\Lambda=\Lambda(V)=\Lambda_{\mathbb{R}}\left[\theta_{1}, \ldots, \theta_{n}\right],$ $(\Lambda^{*})^{m-i}$ denotes the degree- $(m-i)$ piece of the exterior algebra $\Lambda^{*}=\Lambda(V^{*})=\Lambda_{\mathbb{R}}\left[\xi_{1}, \ldots, \xi_{n}\right]$, note that $\Lambda^{i}=0$ (resp., $(\Lambda^{*})^{m-i}=0$) except $0 \leq i \leq n$ (resp., $0 \leq m-i \leq n$. We demand that $\theta_{1}, \ldots, \theta_{n}$ and $\xi_{1}, \ldots, \xi_{n}$ form dual basis for $V$ and $V^{*}$. $S_{n}$ acts on $\Lambda$ and $\Lambda^{*}$ as the exterior powers of the corresponding permutation representations hence acts on $H_{n,m}^{\prime}$.

We define a pairing \footnote{Again, one should think intuitively that this pairs “homology” with “cohomology”!} on $H_{n,m}^{\prime}$ by setting
\begin{equation} \label{pair2}
\left\langle u \otimes v^{*}, u^{\prime} \otimes\left(v^{*}\right)^{\prime}\right\rangle:= (-1)^\varepsilon \left(u,\left(v^{*}\right)^{\prime}\right)\left(u^{\prime}, v^{*}\right)
\end{equation}
and extending linearly, where $(-,-)$ is the bilinear map of $\Lambda^{i} \times (\Lambda^{*})^{j} \rightarrow \mathbb{R}$ which on indecomposable elements $u=u_{1} \wedge \cdots \wedge u_{i}$ in $\Lambda^{i}$ and $v^{*}=v_{1}^{*} \wedge \cdots \wedge v_{j}^{*}$ in $(\Lambda^{*})^{j}$ yields
\begin{equation} \label{det}
\left(u, v^{*}\right)=
\begin{cases} \operatorname{det}\left(v_{k}^{*}\left(u_{l}\right)\right), & \text { if } i=j, \\ 
0, & \text { if } i \neq j.\end{cases}
\end{equation}
 and
\begin{equation} \label{corr}
\varepsilon= \begin{cases}m(m-1)/2, & \text { if } 0 \leq m \leq n, \\ (2n-m)(2n-m-1)/2, & \text { if } n < m \leq 2n.\end{cases}
\end{equation}
Here the global sign correction $(-1)^\varepsilon$ is to make Hodge--Riemann bilinear
relations satisfied with the \emph{standard sign}, that is, the later-defined Lefschetz form is positive
definite on the lowest non-zero degree of $H_{n,m}^{\prime}$ (which is necessarily primitive
because the later-defined operator satisfies the hard Lefschetz theorem).

Recall that for each $\alpha \in V^{*}$ (or $v \in V$, respectively), it is possible to define an anti-derivation $i_{\alpha}$ called the \emph{interior product} with $\alpha$ on the algebra $\Lambda(V)$ (or $\Lambda(V^{*})$, resp., we only consider the first case in the remaining of this paragraph):
$$i_{\alpha}: \Lambda^{k} V \rightarrow \Lambda^{k-1} V,$$
which satisfies $\left(i_{\alpha} \mathbf{w}\right)\left(v_{1}^{*}, v_{2}^{*}, \ldots, v_{k-1}^{*}\right)=\mathbf{w}\left(\alpha, v_{1}^{*}, v_{2}^{*}, \ldots, v_{k-1}^{*}\right)$ for $\mathbf{w} \in \Lambda^{k} V$ and $v_{1}^{*}, v_{2}^{*}, \ldots, v_{k-1}^{*}$ are $k-1$ elements of $V^{*}$. It follows that if $v$ is an element of $V\left(=\Lambda^{1} V\right)$, then $i_{\alpha} v=\alpha(v)$ is the dual pairing between elements of $V$ and elements of $V^{*}$. Furthermore, $i_{\alpha}$ is a graded derivation of degree $-1$:
$$i_{\alpha}(u \wedge v)=\left(i_{\alpha} u\right) \wedge v+(-1)^{\operatorname{deg} u} u \wedge\left(i_{\alpha} v\right) .
$$ Additionally, let $i_{\alpha} f=0$ whenever $f$ is a pure scalar (i.e., belonging to $\Lambda^{0} V$ ).

For each $v \in V$ (similarly for $\alpha \in V^{*}$), let $e_{v}$ denote the \emph{exterior (wedge) product} with $v$ on the left:
$$e_{v}: \Lambda^{k-1} V \rightarrow \Lambda^{k} V.$$

The following lemma is standard and we omit the proof.
\begin{lem} \label{exc}
For any $u, v \in V$ and $\alpha, \beta \in V^{*}$, we have
\begin{itemize}
\item[(a)]$i_{\alpha} \circ i_{\beta}=-i_{\beta} \circ i_{\alpha}$, in particular, $i_{\alpha} \circ i_{\alpha}=0$;
\item[(b)]$e_{v} \circ e_{u}=-e_{u} \circ e_{v}$, in particular, $e_{v} \circ e_{v}=0$;
\item[(c)]$i_{\alpha} \circ e_{v}+ e_{v} \circ i_{\alpha}=\alpha(v) \cdot \operatorname{ id }$, $i_{v} \circ e_{\alpha}+ e_{\alpha} \circ i_{v}=\alpha(v) \cdot \operatorname{ id }$;
\item[(d)](Adjoint) The bilinear pairing $(-,-)$ satisfies $\left(i_{\alpha}(u), \beta\right)=\left(u, e_{\alpha} \beta\right)=\left(u, \alpha \wedge \beta\right)$. 
\end{itemize}
\end{lem}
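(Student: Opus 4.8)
The final statement is Lemma~\ref{exc}, a compendium of four standard identities relating the interior and exterior products on $\Lambda(V)$ and $\Lambda(V^*)$. Here is how I would organize the (omitted) proof.

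\textbf{Parts (a) and (b): anticommutativity.} For (b), the identity $e_v\circ e_u = -e_u\circ e_v$ is immediate from the graded-commutativity of the wedge product: $v\wedge(u\wedge w) = (v\wedge u)\wedge w = -(u\wedge v)\wedge w = -u\wedge(v\wedge w)$, and setting $u=v$ gives $e_v^2 = 0$ since $v\wedge v = 0$. For (a), the cleanest route is to dualize: $i_\alpha$ on $\Lambda^k V$ is, up to the canonical identification $\Lambda^k V \cong (\Lambda^k V^*)^*$, the transpose of $e_\alpha$ on $\Lambda(V^*)$ — this is exactly the content of the defining formula $(i_\alpha\mathbf{w})(v_1^*,\dots,v_{k-1}^*) = \mathbf{w}(\alpha,v_1^*,\dots,v_{k-1}^*)$. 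Hence (a) follows from (b) applied to $V^*$ by transposing. Alternatively one checks it directly on the defining multilinear formula, using antisymmetry of $\mathbf{w}$ in its first two slots.

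\textbf{Part (c): the Clifford relation.} This is the heart of the lemma. I would prove $i_\alpha\circ e_v + e_v\circ i_\alpha = \alpha(v)\cdot\mathrm{id}$ by induction on the exterior degree $k$, using that both $i_\alpha$ and $e_v$ are (anti)derivations. On $\Lambda^0 V$: $i_\alpha$ kills scalars, and $i_\alpha(e_v(1)) = i_\alpha(v) = \alpha(v)$, so the formula holds. For the inductive step, it suffices to check the identity on a decomposable element $u\wedge w$ with $u\in V$; expanding $i_\alpha(v\wedge u\wedge w)$ and $v\wedge i_\alpha(u\wedge w)$ with the graded Leibniz rule and collecting the cross terms, the $\alpha(v)$ on the left pairs off against the telescoping of the $\alpha(u)$-terms. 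The second identity $i_v\circ e_\alpha + e_\alpha\circ i_v = \alpha(v)\cdot\mathrm{id}$ is the same statement with the roles of $V$ and $V^*$ swapped, noting $\alpha(v) = v(\alpha)$ under the canonical pairing.

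\textbf{Part (d): adjointness.} Here I would reduce to decomposable elements $u = u_1\wedge\cdots\wedge u_i \in \Lambda^i V$, $\beta = \beta_1\wedge\cdots\wedge\beta_{i-1}\in(\Lambda^*)^{i-1}$, and $\alpha\in V^*$, and compute both $(i_\alpha u,\beta)$ and $(u,\alpha\wedge\beta)$ as determinants. Expanding $i_\alpha(u_1\wedge\cdots\wedge u_i) = \sum_l (-1)^{l-1}\alpha(u_l)\, u_1\wedge\cdots\widehat{u_l}\cdots\wedge u_i$, the pairing $(i_\alpha u,\beta)$ becomes $\sum_l (-1)^{l-1}\alpha(u_l)\det(\beta_k(u_{l'}))_{l'\ne l}$, which is precisely the cofactor (Laplace) expansion of $\det$ of the $i\times i$ matrix whose first row is $(\alpha(u_1),\dots,\alpha(u_i))$ and whose remaining rows are $(\beta_k(u_l))$ — and that determinant is exactly $(u,\alpha\wedge\beta)$ by the definition of the pairing. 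So the two sides agree.

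\textbf{Main obstacle.} None of this is deep — it is all standard linear algebra, which is why the authors omit it — but the one place to be careful is bookkeeping the signs: getting the $(-1)^{\deg}$ in the anti-derivation property of $i_\alpha$ to line up with the $(-1)^{l-1}$ in the Laplace expansion in part (d), and making sure the transpose argument in part (a) does not introduce a spurious sign. The conceptually cleanest presentation is to prove (c) first by the derivation induction, then derive (a) and the second half of (c) by $V\leftrightarrow V^*$ symmetry, and handle (d) by direct determinant expansion.
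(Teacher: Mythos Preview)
The paper explicitly omits the proof of Lemma~\ref{exc} as standard, so there is nothing to compare against; your write-up is correct and is exactly the kind of verification the authors had in mind. One small streamlining: in part~(c) no induction is actually needed---once you invoke the graded Leibniz rule $i_\alpha(v\wedge x)=\alpha(v)\,x - v\wedge i_\alpha(x)$, adding $e_v i_\alpha(x)=v\wedge i_\alpha(x)$ gives $\alpha(v)\,x$ in one step for arbitrary $x\in\Lambda(V)$, so the ``telescoping of the $\alpha(u)$-terms'' you mention never arises.
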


We define 
$$L: (H_{n,m}^{\prime})^{i} \longrightarrow (H_{n,m}^{\prime})^{i+2}$$
to be the linear map 
$$L:=\sum_{k=1}^{n} e_{\theta_{k}} \otimes i_{\theta_{k}}.$$

Our second main result is that $(H_{n,m}^{\prime},\langle-,-\rangle, L)$ also satisfies the K\"ahler package.

\begin{thm} \label{kah2} 
For any pair of natural numbers $(n,m)$ with $m \leq 2n$, we have
\begin{itemize}
\item[(a)] (PD)  The bilinear pairing 
$$\langle-,-\rangle: H_{n,m}^{\prime} \times H_{n,m}^{\prime} \longrightarrow \mathbb{R}$$
is an $S_{n}$-equivariant symmetric graded bilinear form, which is non-degenerate;
\item[(b)] (HL) $L: (H_{n,m}^{\prime})^{i} \longrightarrow (H_{n,m}^{\prime})^{i+2}$
is an $S_{n}$-equivariant Lefschetz operator satisfying the hard Lefschetz theorem, i.e., for all $i \geq 0$, 
$$L^{i}: (H_{n,m}^{\prime})^{-i} \longrightarrow (H_{n,m}^{\prime})^{i}$$
is an isomorphism;
\item[(c)] (HR) For $0 \leq i \leq m/2$, the bilinear form 
$$(a, b)_{L}^{-m+2i}=\left\langle a, L^{m-2i} b\right\rangle: (H_{n,m}^{\prime})^{-m+2i} \times (H_{n,m}^{\prime})^{-m+2i} \longrightarrow \mathbb{R}$$
is $S_{n}$-equivariant and $(-1)^{i}$-definite on the primitive subspace $(P_{L}^{\prime})^{-m+2i}=\operatorname{ker}\left(L^{m-2i+1}\right) \cap (H_{n,m}^{\prime})^{-m+2i}$.
\end{itemize}
\end{thm}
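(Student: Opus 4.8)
The plan is to run the same three steps as in the proof of Theorem \ref{kah}, with the Weyl-algebra manipulations there replaced by the Clifford-algebra relations recorded in Lemma \ref{exc}. Part (a) is routine: symmetry of $\langle-,-\rangle$ is immediate from (\ref{pair2}), since $(u,(v^*)')(u',v^*)$ is unchanged when the two arguments are swapped; the graded property holds for degree reasons because $(-,-)$ pairs $\Lambda^i$ with $(\Lambda^*)^i$ and is $0$ otherwise; $S_n$-equivariance holds because $(-,-)$, being a determinant of dual pairings, is $S_n$-equivariant and $\varepsilon$ is a constant. For non-degeneracy I would exhibit an explicit dual basis: writing $\theta_S:=\theta_{s_1}\wedge\cdots\wedge\theta_{s_i}$ and $\xi_T:=\xi_{t_1}\wedge\cdots\wedge\xi_{t_{m-i}}$ for increasingly ordered subsets $S,T$, Lemma \ref{exc}(d) gives $(\theta_S,\xi_T)=\delta_{S,T}$, hence $\langle\theta_S\otimes\xi_T,\theta_{S'}\otimes\xi_{T'}\rangle=\varepsilon\,\delta_{S,T'}\,\delta_{S',T}$, so $\{\varepsilon\,\theta_T\otimes\xi_S\}$ is the basis of $(H_{n,m}^\prime)^{m-2i}$ dual to $\{\theta_S\otimes\xi_T\}$ in $(H_{n,m}^\prime)^{-m+2i}$.

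For part (b), that $L$ is a Lefschetz operator is the Clifford analogue of the computation in Theorem \ref{kah}: by the adjointness $(e_{\theta_k}u,w^*)=(u,i_{\theta_k}w^*)$ from Lemma \ref{exc}(d),
$$\begin{aligned}
\langle L(u\otimes v^*),u'\otimes(v^*)'\rangle
&=\sum_{k}\varepsilon\,(e_{\theta_k}u,(v^*)')(u',i_{\theta_k}v^*)\\
&=\sum_{k}\varepsilon\,(u,i_{\theta_k}(v^*)')(e_{\theta_k}u',v^*)\\
&=\langle u\otimes v^*,L(u'\otimes(v^*)')\rangle,
\end{aligned}$$
and $S_n$-equivariance of $L$ is manifest. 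To prove hard Lefschetz I would introduce the lowering operator
$$F:=\sum_{k=1}^{n} i_{\xi_k}\otimes e_{\xi_k}\colon (H_{n,m}^\prime)^{i}\longrightarrow (H_{n,m}^\prime)^{i-2}$$
and compute $[L,F]$ from Lemma \ref{exc}: the terms with $k\neq l$ cancel because $e_{\theta_k}$ anticommutes with $i_{\xi_l}$ and $i_{\theta_k}$ anticommutes with $e_{\xi_l}$, while the $k=l$ terms collapse, via $i_{\xi_k}e_{\theta_k}+e_{\theta_k}i_{\xi_k}=\operatorname{id}$ and its mirror on $\Lambda^*$, to
$$h:=[L,F]=N\otimes\operatorname{id}-\operatorname{id}\otimes N^{*},$$
where $N$ and $N^{*}$ are the degree operators on $\Lambda$ and $\Lambda^{*}$; thus $h$ acts on $(H_{n,m}^\prime)^{-m+2i}=\Lambda^i\otimes(\Lambda^*)^{m-i}$ by the scalar $i-(m-i)=-m+2i$, as needed. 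A short further bracket computation, using $[N,e_{\theta_k}]=e_{\theta_k}$ and $[N^{*},i_{\theta_k}]=-i_{\theta_k}$, gives $[h,L]=2L$ and $[h,F]=-2F$, and Lemma \ref{HL} then yields the hard Lefschetz theorem.

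For part (c), equivariance of the Lefschetz form is again clear, and for the definiteness I would imitate the geometric argument of Theorem \ref{kah}. The point is that the ``$k$-th block'' $\Lambda_{\mathbb{R}}[\theta_k]\otimes\Lambda_{\mathbb{R}}[\xi_k]$, equipped with $e_k=e_{\theta_k}\otimes i_{\theta_k}$, $f_k=i_{\xi_k}\otimes e_{\xi_k}$ and $h_k=[e_k,f_k]$, is isomorphic as an $\mathfrak{sl}_2(\mathbb{R})$-module to $V(1)\oplus V(0)\oplus V(0)$, with $V(1)$ spanned by $1\otimes\xi_k$ and $\theta_k\otimes 1$ and the two trivial summands by $1\otimes 1$ and $\theta_k\otimes\xi_k$. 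Since $L=\sum_k e_k$ comes from the diagonal embedding $\mathfrak{sl}_2(\mathbb{R})\hookrightarrow\bigoplus^{n}\mathfrak{sl}_2(\mathbb{R})$, the degree-$m$ part decomposes, as an $\mathfrak{sl}_2(\mathbb{R})$-module with $L$ as raising operator, as
$$H_{n,m}^\prime\cong\bigoplus_{\substack{m_1,\dots,m_n\in\{0,1,2\}\\ m_1+\cdots+m_n=m}}\ \bigotimes_{k=1}^{n}U_{m_k}\ \cong\ \bigoplus V(1)^{\otimes j}\ \cong\ \bigoplus H^{*}\big((\mathbb{C}\mathbb{P}^{1})^{j},\mathbb{R}\big),$$
where $U_0=U_2=V(0)$, $U_1=V(1)$ and $j$ is the number of indices with $m_k=1$; under this identification $L$ corresponds to multiplication by an ample class on $(\mathbb{C}\mathbb{P}^{1})^{j}$. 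Classical Hodge theory supplies the Hodge--Riemann relations for each summand $H^{*}((\mathbb{C}\mathbb{P}^{1})^{j},\mathbb{R})$ with its Poincar\'e pairing, and the dual basis of part (a) shows the decomposition is orthogonal for the Lefschetz form of $H_{n,m}^\prime$; so, using that an $\mathfrak{sl}_2(\mathbb{R})$-invariant graded form on a multiplicity space $V(k)^{\oplus c}$ is determined by its restriction to the lowest-weight vectors, it remains only to compare, summand by summand, the Lefschetz form coming from $\langle-,-\rangle$ with the one coming from the Poincar\'e pairing on the primitive subspaces.

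I expect this last comparison to be the main obstacle, and it is precisely what the global constant $\varepsilon$ of (\ref{corr}) exists to fix. One must follow the signs produced when the wedge factors of the highest- and lowest-weight vectors $\theta_S\otimes\xi_T$ of a summand are regrouped to identify $\bigotimes_k U_{m_k}$ with $H^{*}((\mathbb{C}\mathbb{P}^{1})^{j},\mathbb{R})$, weigh them against the sign built into the Hodge--Riemann form of a compact K\"ahler manifold, and check that the net discrepancy depends only on $m$ and $n$ (through the parity of $m$, respectively of $2n-m$ in the range $n<m\le 2n$), so that a single choice of $\varepsilon$ normalizes every summand at once. Once this is verified, the Lefschetz form is positive definite on the lowest nonzero degree of $H_{n,m}^\prime$, and the $(-1)^{i}$-definiteness on each $(P_{L}^\prime)^{-m+2i}$ follows by transferring the alternating Hodge--Riemann signs from the K\"ahler summands through the orthogonal primitive decomposition.
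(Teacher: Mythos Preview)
Your proposal is correct and follows essentially the same approach as the paper: the same dual-basis argument for (a), the same lowering operator $F=\sum_k i_{\xi_k}\otimes e_{\xi_k}$ and $\mathfrak{sl}_2$-triple verification for (b), and the same reduction of (c) to classical Hodge theory on products of projective spaces via the diagonal $\mathfrak{sl}_2\hookrightarrow\bigoplus^n\mathfrak{sl}_2$ embedding, with the sign issue for $\varepsilon$ handled at the same level of detail as the paper (which likewise ``leaves the details to the reader''). Your write-up is in fact more explicit than the paper's in spelling out the coordinate blocks $\Lambda_{\mathbb{R}}[\theta_k]\otimes\Lambda_{\mathbb{R}}[\xi_k]\cong V(1)\oplus V(0)\oplus V(0)$ and the resulting $m_k\in\{0,1,2\}$ indexing of the summands, but this is exactly what the paper means by ``each irreducible representation $V(\alpha_i)$ is at most $2$-dimensional.''
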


\begin{proof}
The proof is very similar to that of Theorem \ref{kah}.
It is clear that $\langle-,-\rangle$ is a symmetric bilinear form. It is graded by the definition of $(-,-)$ and it is $S_{n}$-equivariant since $(-,-)$ is  $S_{n}$-equivariant. Finally, it is non-degenerate since $\left\{\theta^{\wedge \alpha} \otimes \xi^{\wedge \beta} \mid 
|\alpha|=i, |\beta |=m-i\right\}$ and $\left\{\theta^{\wedge \beta^{\prime}} \otimes {\xi^{\wedge \alpha^{\prime}}} \mid
|\beta^{\prime}|=m-i, |\alpha^{\prime} |=i\right\} $ form a dual basis in $(H_{n,m}^{\prime})^{-m+2i}$ and $(H_{n,m}^{\prime})^{m-2i}$, where $\theta^{\wedge \alpha}$ (similarly for $\xi^{\wedge \beta}$) denotes the exterior power $\theta_{1}^{\alpha_{1}} \wedge \cdots \wedge \theta_{n}^{\alpha_{n}}$ for 
$\alpha=(\alpha_{1}, \cdots ,\alpha_{n})$ where $\alpha_{j}=0 \text{ or } 1$, $|\alpha|:=\sum_{i=1}^{n} \alpha_{i}$. That is, $$
\langle \theta^{\wedge \alpha} \otimes \xi^{\wedge \beta},\theta^{\wedge \beta^{\prime}} \otimes {y^{\wedge \alpha^{\prime}}}\rangle
\begin{cases}1, & \text { if } \alpha=\alpha^{\prime}, \text { and } \beta=\beta^{\prime},\\ 0, & \text { otherwise }\end{cases},
$$
which completes the proof of part (a).

Let us turn to the proof of part (b). The adjoint property (d) of Lemma \ref{exc}
shows that $L$ is a Lefschetz operator. It is clear that $L$ is $S_{n}$-equivariant. To show that $L$ satisfies hard Lefschetz, we define the lowering operator
$F: H_{n,m}^{i} \longrightarrow H_{n,m}^{i-2}$ as 
$$F:=\sum_{k=1}^{n} i_{\xi_{k}} \otimes e_{\xi_{k}}.$$

A direct computation based on Lemma \ref{exc} shows that
$$L F-F L=\sum_{k=1}^{n} (e_{\theta_{k}} \circ i_{\xi_{k}} \otimes \operatorname{ id }- \operatorname{ id } \otimes e_{\xi_{k}} \circ i_{\theta_{k}}).$$
 If we define 
$h: (H_{n,m}^{\prime})^{i} \longrightarrow (H_{n,m}^{\prime})^{i}$ to be the linear map 
$$h:=\sum_{k=1}^{n} (e_{\theta_{k}} \circ i_{\xi_{k}} \otimes \operatorname{ id }- \operatorname{ id } \otimes e_{\xi_{k}} \circ i_{\theta_{k}}),$$ 
then it is not hard to see that $(\sum_{k=1}^{n}e_{\theta_{k}} \circ i_{\xi_{k}})(\theta^{\wedge \alpha} )=
|\alpha| \theta^{\wedge \alpha}$ and similarly for $\sum_{k=1}^{n}\left(e_{\xi_{k}} \circ i_{\theta_{k}}\right)$, thus $h \cdot v=i v$ for all $v \in (H_{n,m}^{\prime})^{i}$ as required.

Finally, one can check the $\mathfrak{s l_{2}}$-relations for $\langle L, F, h\rangle$ using Lemma \ref{exc}. By Lemma \ref{HL}, $L$ satisfies hard Lefschetz.

The proof of part (c), especially the proof of Hodge--Riemann bilinear relations, is quite similar to that of part (c) in Theorem \ref{kah} given earlier except that the action of the triple $\mathfrak{s l}_{2}(\mathbb{R})=\langle e_{k}, f_{k}, h_{k}\rangle$ should be replaced with $$e_{k}=e_{\theta_{k}} \otimes i_{\theta_{k}}, f_{k}=i_{\xi_{k}} \otimes e_{\xi_{k}} \text{ , and } h_{k}=e_{\theta_{k}} \circ i_{\xi_{k}} \otimes \operatorname{ id }- \operatorname{ id } \otimes e_{\xi_{k}} \circ i_{\theta_{k}},$$ and each irreducible representation $V\left(\alpha_{i}\right)$ is at most $2$-dimensional. It is easy to check that the global sign correction $\varepsilon$ in \eqref{corr} makes the bilinear form $\langle-,-\rangle$ on $H_{n,m}^{\prime}$ restricts to the usual Poincar\'e pairing on each cohomology of product of projective space up to a positive scalar in a decomposition similar to \eqref{dec}, we leave the details to the reader. The proof is completed.
 \end{proof}

We have the following corollary of the hard Lefschetz theorem.

\begin{cor}
$H^{*}((S^{1})^{n}, \mathbb{R})=\Lambda_{\mathbb{R}}\left[\alpha_{1}, \ldots, \alpha_{n}\right]$ is strongly equivariantly log-concave for any $0 \leq m \leq 2n$: $$V^{0} \otimes V^{m} \subset V^{1} \otimes V^{m-1} \subset V^{2} \otimes V^{m-2} \subset \cdots \subset \begin{cases}V^{m / 2} \otimes V^{m / 2} & \text { if } m \text { is even } \\ V^{(m-1) / 2} \otimes V^{(m+1) / 2} & \text { if } m \text { is odd, }\end{cases}$$
where $V^{i}$ denotes the degree-$i$ piece of $H^{*}((S^{1})^{n}, \mathbb{R})=\Lambda_{\mathbb{R}}\left[\alpha_{1}, \ldots, \alpha_{n}\right]$, which vanishes except $0 \leq i \leq n$. Furthermore, the inclusions of $S_{n}$-representations are given by the operator $$
L=\sum_{k=1}^{n} e_{\alpha_{k}} \otimes i_{\alpha_{k}^{*}}.$$
where $e_{\alpha_{k}}$ and $i_{\alpha_{k}^{*}}$ are the exterior product and interior product, respectively, and $\alpha_{k}^{*}$ is the linear functional on $V= \langle \alpha_{1}, \ldots, \alpha_{n} \rangle$ satisfying $\alpha_{k}^{*}\left(\alpha_{l}\right)=\delta_{k l}= \begin{cases}0, & \text { if } k \neq l, \\ 1, & \text { if } k=l.\end{cases}$
\end{cor}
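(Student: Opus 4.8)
The plan is to read off this corollary from the hard Lefschetz part of Theorem \ref{kah2}, once the representations $V^{i}\otimes V^{m-i}$ have been identified with the graded pieces of $H_{n,m}^{\prime}$. First I would note that $V=\langle\alpha_{1},\dots,\alpha_{n}\rangle$ is, up to the degree shift, the permutation representation of $S_{n}$, and that it is self-dual: the linear map $V\to V^{*}$, $\alpha_{k}\mapsto\alpha_{k}^{*}$, is $S_{n}$-equivariant because $\sigma\cdot\alpha_{k}^{*}=\alpha_{\sigma(k)}^{*}$ mirrors $\sigma\cdot\alpha_{k}=\alpha_{\sigma(k)}$. Hence there is an $S_{n}$-equivariant isomorphism of graded algebras $\Lambda(V^{*})\cong\Lambda(V)$ extending $\xi_{k}\mapsto\alpha_{k}$, where (as in Theorem \ref{kah2}) $\theta_{k}=\alpha_{k}$ and $\xi_{k}=\alpha_{k}^{*}$ are dual bases. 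Consequently, for each $i$,
$$(H_{n,m}^{\prime})^{-m+2i}=\Lambda^{i}\otimes(\Lambda^{*})^{m-i}\;\cong\;V^{i}\otimes V^{m-i}$$
as $S_{n}$-representations, with the degrees matching on the nose.

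Next I would check that under this identification the operator $L=\sum_{k}e_{\theta_{k}}\otimes i_{\theta_{k}}$ of Theorem \ref{kah2} becomes exactly $L=\sum_{k}e_{\alpha_{k}}\otimes i_{\alpha_{k}^{*}}$. The first tensor factor is literally unchanged since $\theta_{k}=\alpha_{k}$; for the second, the interior product $i_{\theta_{k}}$ on $\Lambda(V^{*})$ transports to the interior product by $\alpha_{k}^{*}\in V^{*}$ on $\Lambda(V)$, because both are graded anti-derivations of degree $-1$ and on generators $i_{\theta_{k}}\xi_{l}=\xi_{l}(\theta_{k})=\delta_{kl}=\alpha_{k}^{*}(\alpha_{l})=i_{\alpha_{k}^{*}}\alpha_{l}$. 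With this in hand, Theorem \ref{kah2}(b) gives that $L$ is $S_{n}$-equivariant and that $L^{k}\colon (H_{n,m}^{\prime})^{-k}\to(H_{n,m}^{\prime})^{k}$ is an isomorphism for every $k\ge 1$; factoring $L^{k}=L^{k-1}\circ L$ shows that
$$L\colon (H_{n,m}^{\prime})^{-k}\longrightarrow (H_{n,m}^{\prime})^{-k+2}$$
is injective. Applying this with $k=m-2i$ for $0\le i\le\lfloor m/2\rfloor-1$ (so $k\ge 2$) and translating back via the isomorphism above produces $S_{n}$-equivariant injections $V^{i}\otimes V^{m-i}\hookrightarrow V^{i+1}\otimes V^{m-i-1}$ realized by $L$, i.e. the chain displayed in the statement (the exterior-algebra analogue of \eqref{str}), which is precisely the strong equivariant log-concavity of $H^{*}((S^{1})^{n},\mathbb{R})$.

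I do not expect a genuine obstacle: the corollary is a formal consequence of hard Lefschetz together with the self-duality of the permutation representation. The only points needing care are bookkeeping ones — transporting $L$ correctly across $\Lambda(V^{*})\cong\Lambda(V)$, observing that every source degree $-m+2i$ occurring in the chain is $\le -2<0$ so that $L$ is honestly injective there (which is exactly why the chain terminates at the middle term $V^{\lfloor m/2\rfloor}\otimes V^{\lceil m/2\rceil}$, where $L$ would only be surjective), and noting that when $n<m\le 2n$ the terms $V^{i}\otimes V^{m-i}$ with $i<m-n$ vanish, so the corresponding inclusions hold trivially.
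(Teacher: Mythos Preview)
Your proposal is correct and follows exactly the approach the paper intends: the corollary is stated immediately after Theorem \ref{kah2} with the sentence ``We have the following corollary of the hard Lefschetz theorem,'' and no further proof is given. Your write-up simply spells out the details the paper leaves implicit --- the identification $(H_{n,m}^{\prime})^{-m+2i}\cong V^{i}\otimes V^{m-i}$ via self-duality of the permutation representation, the transport of $L$, and the extraction of injectivity of $L$ in negative degrees from the hard Lefschetz isomorphism --- all of which is straightforward and matches the paper's intent.
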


\begin{rmk}
In \cite[Proposition 5.7]{gedeon2017equivariant}, the authors gave an alternative proof of the equivariant log-concavity  for the exterior powers using formulas for the Kronecker products of Schur functions of hook
shapes without giving the inclusion maps.
\end{rmk}

\begin{rmk}
For our application to equivariant log-concavity, it is only necessary to establish the equivariant hard Lefschetz property, here we quote the following in \cite{braden2020singular}
\begin{quote}
“The one insight that we can take away is that, while the hard Lefschetz theorem is
typically the main statement needed for applications, it is always necessary to prove Poincar\'e duality, the hard Lefschetz theorem, and the Hodge--Riemann relations together as a single package.”
\end{quote}
\end{rmk}

\subsection{An interesting example}

For the exterior algebra, we now show that “usual” gradings on tensor products satisfy Poincar\'e duality and hard Lefschetz but not Hodge--Riemann bilinear relations.

Fix any natural numbers $n$, consider the graded $\mathbb{R}$-vector space
$$H_{n}=\bigoplus_{i=0}^{2n} H_{n}^{-n+i} \text { , with } H_{n}^{-n+i}:=\bigoplus_{j+k=i} \Lambda^{j} \otimes (\Lambda^{*})^{k},$$
where $\Lambda^{j}$ denotes the degree- $j$ piece of the exterior algebra $\Lambda=\Lambda(V)=\Lambda_{\mathbb{R}}\left[\theta_{1}, \ldots, \theta_{n}\right],$ $(\Lambda^{*})^{k}$ denotes the degree- $k$ piece of the exterior algebra $\Lambda^{*}=\Lambda(V^{*})=\Lambda_{\mathbb{R}}\left[\xi_{1}, \ldots, \xi_{n}\right]$, i.e., we consider the “usual” gradings on the tensor product $\Lambda \otimes \Lambda^{*}$ up to a degree shift of $-n$. Again, we demand that $\theta_{1}, \ldots, \theta_{n}$ and $\xi_{1}, \ldots, \xi_{n}$ form dual basis of $V$ and $V^{*}$. $S_{n}$ acts on $\Lambda$ and $\Lambda^{*}$ as the exterior powers of the corresponding permutation representations hence acts on $H_{n}$.

We define a graded pairing $\langle-,-\rangle$ on $H_{n}$ by the multiplication map
\begin{equation}
H_{n}^{-n+i} \otimes H_{n}^{n-i} \rightarrow H_{n}^{n} \cong \mathbb{R},
\end{equation}
where the linear isomorphism maps $\theta_{1} \wedge \ldots \wedge \theta_{n} \otimes \xi_{1} \wedge \ldots \wedge \xi_{n}$ to $1$.
Note the unusual grading convention here, since the identity
element $1 \otimes 1$ lives in degree $-n$.

We define 
$$\operatorname{L}: H_{n}^{i} \longrightarrow H_{n}^{i+2}$$
to be the linear map 
$$\operatorname{L}:=\sum_{k=1}^{n} \operatorname{e_{\theta_{k}}} \otimes \operatorname{e_{\xi_{k}}}.$$

We have the following theorem.

\begin{thm} \label{kah3} 
For any natural numbers $n$, we have
\begin{itemize}
\item[(a)] (PD)
$$\langle-,-\rangle: H_{n} \times H_{n} \longrightarrow \mathbb{R}$$
is an $S_{n}$-equivariant graded bilinear form, which is non-degenerate;
\item[(b)] (HL) $\operatorname{L}: H_{n}^{i} \longrightarrow H_{n}^{i+2}$
is $S_{n}$-equivariant and satisfies the hard Lefschetz theorem, i.e., for all $i \geq 0$, 
$$
\operatorname{L}^{i}: H_{n}^{-i} \longrightarrow H_{n}^{i}
$$
is an isomorphism.
\end{itemize}
\end{thm}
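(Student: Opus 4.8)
The plan is to recognize the space $H_{n}$, with its grading shifted down by $n$, as the exterior algebra $\Lambda^{\bullet}(W)$ of the $2n$-dimensional vector space $W:=V\oplus V^{\ast}$. Under the standard identification $\Lambda(V)\otimes\Lambda(V^{\ast})\cong\Lambda(V\oplus V^{\ast})$ of graded algebras, the summand $\Lambda^{j}\otimes(\Lambda^{\ast})^{k}$ has total exterior degree $j+k$ and lies in $H_{n}^{-n+(j+k)}$, so our degree is the exterior degree minus $n$. Both assertions then flow from this picture: the pairing of the statement is the Poincar\'e pairing of the Poincar\'e duality algebra $\Lambda^{\bullet}(W)$ via the top form, and the operator $L$ coincides, up to a sign on each summand $\Lambda^{j}\otimes(\Lambda^{\ast})^{k}$, with wedging by the element $\omega:=\sum_{k=1}^{n}\theta_{k}\wedge\xi_{k}\in\Lambda^{2}(W)$, a non-degenerate $2$-form; hard Lefschetz is then an instance of the classical $\mathfrak{sl}_{2}(\mathbb{R})$-action on the exterior algebra of a symplectic vector space. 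In keeping with the style of Theorems \ref{kah} and \ref{kah2}, I would nonetheless give a self-contained argument that exhibits the $\mathfrak{sl}_{2}$-triple by hand.

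For part (a), by construction $\langle x,y\rangle$ is the coefficient of $\theta_{1}\wedge\cdots\wedge\theta_{n}\otimes\xi_{1}\wedge\cdots\wedge\xi_{n}$ in the product $xy\in\Lambda^{\bullet}(W)$, so it vanishes unless $\deg x+\deg y=0$, i.e.\ it is graded. It is $S_{n}$-equivariant because multiplication in $\Lambda(V)\otimes\Lambda(V^{\ast})$ is, and because the top element $\theta_{1}\wedge\cdots\wedge\theta_{n}\otimes\xi_{1}\wedge\cdots\wedge\xi_{n}$ is $S_{n}$-invariant: a permutation $\sigma$ scales $\theta_{1}\wedge\cdots\wedge\theta_{n}$ and $\xi_{1}\wedge\cdots\wedge\xi_{n}$ each by $\operatorname{sgn}(\sigma)$, hence the product by $\operatorname{sgn}(\sigma)^{2}=1$, so the isomorphism $H_{n}^{n}\cong\mathbb{R}$ is a map of trivial $S_{n}$-modules. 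Non-degeneracy is then proved exactly as in Theorems \ref{kah} and \ref{kah2}: for multi-indices $\alpha,\beta\in\{0,1\}^{n}$ the monomials $\theta^{\wedge\alpha}\otimes\xi^{\wedge\beta}$ form a basis of $H_{n}$, and $\theta^{\wedge\alpha}\otimes\xi^{\wedge\beta}$ pairs to $\pm1$ with $\theta^{\wedge\bar\alpha}\otimes\xi^{\wedge\bar\beta}$ (the complementary multi-indices) and to $0$ with every other monomial of the complementary degree, so each $\langle-,-\rangle\colon H_{n}^{a}\times H_{n}^{-a}\to\mathbb{R}$ is perfect. This is just the Poincar\'e duality of $\Lambda^{\bullet}(W)$ via the top wedge.

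For part (b), $L$ is manifestly $S_{n}$-equivariant, being symmetric in the index $k$. To prove hard Lefschetz I would introduce the lowering operator
\[
F:=\sum_{k=1}^{n} i_{\xi_{k}}\otimes i_{\theta_{k}}\colon H_{n}^{i}\longrightarrow H_{n}^{i-2},
\]
where $i_{\xi_{k}}$ contracts the $\Lambda(V)$-factor and $i_{\theta_{k}}$ contracts the $\Lambda(V^{\ast})$-factor (legitimate since $\{\theta_{k}\}$ and $\{\xi_{k}\}$ are dual bases), together with
\[
h:=\sum_{k=1}^{n}\bigl(e_{\theta_{k}}\circ i_{\xi_{k}}\otimes\operatorname{id}+\operatorname{id}\otimes e_{\xi_{k}}\circ i_{\theta_{k}}\bigr)-n\cdot\operatorname{id}.
\]
Using Lemma \ref{exc} — the anticommutation of the $e$'s and of the $i$'s, together with $i_{\xi_{k}}\circ e_{\theta_{l}}+e_{\theta_{l}}\circ i_{\xi_{k}}=\delta_{kl}$ and $i_{\theta_{k}}\circ e_{\xi_{l}}+e_{\xi_{l}}\circ i_{\theta_{k}}=\delta_{kl}$ — a direct computation in which the off-diagonal ($k\ne l$) contributions cancel in pairs exactly as in the proof of Theorem \ref{kah2} yields $LF-FL=h$. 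Moreover $\sum_{k} e_{\theta_{k}}\circ i_{\xi_{k}}$ and $\sum_{k} e_{\xi_{k}}\circ i_{\theta_{k}}$ are the number operators of the two exterior factors, acting on $\Lambda^{j}\otimes(\Lambda^{\ast})^{k}$ as multiplication by $j$ and by $k$ respectively, so $h$ acts on $H_{n}^{-n+(j+k)}$ as multiplication by $(j+k)-n$; that is, $h\cdot v=(\deg v)\,v$ for homogeneous $v$. Finally, since $L$ raises each of these two number operators by $1$ and $F$ lowers each by $1$, the relations $[h,L]=2L$ and $[h,F]=-2F$ are immediate. Thus $\langle L,F,h\rangle$ is an $\mathfrak{sl}_{2}(\mathbb{R})$-action with $h$ the grading operator, and Lemma \ref{HL} gives hard Lefschetz. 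Alternatively one may observe that $L$ is conjugate, by the invertible operator acting on $\Lambda^{j}\otimes(\Lambda^{\ast})^{k}$ as $(-1)^{j(j-1)/2}$, to $\omega\wedge(-)$, and quote the standard symplectic Lefschetz $\mathfrak{sl}_{2}$ on $\Lambda^{\bullet}(W)$, whose lowering operator is contraction with the dual bivector $\sum_{k}\theta_{k}^{\ast}\wedge\xi_{k}^{\ast}$.

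I do not expect a genuine obstacle here; the only real care needed is sign bookkeeping, both in matching $\langle-,-\rangle$ with the wedge pairing in (a) and in the commutator computation in (b). The structural point to watch is the contrast with Theorem \ref{kah2}: there one tensor factor carried an exterior and the other an interior operator and $h$ was the \emph{difference} of the two number operators, whereas here both factors of $L$ are exterior operators, so $h$ is the \emph{total} exterior degree shifted by $-n$. This same sign pattern is also why the Hodge--Riemann relations fail for $(H_{n},\langle-,-\rangle,L)$ — the Lefschetz form is not $(-1)^{i}$-definite on the primitive subspaces — as recorded in Remark \ref{coun}; but that failure lies outside the present statement, which claims only Poincar\'e duality and hard Lefschetz.
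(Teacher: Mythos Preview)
Your proposal is correct and follows essentially the same route as the paper: for (a) you use the top-form pairing on $\Lambda\otimes\Lambda^{\ast}$ and dual monomial bases, and for (b) you introduce exactly the same lowering operator $F=\sum_{k} i_{\xi_{k}}\otimes i_{\theta_{k}}$ and weight operator $h=\sum_{k}(e_{\theta_{k}}i_{\xi_{k}}\otimes\mathrm{id}+\mathrm{id}\otimes e_{\xi_{k}}i_{\theta_{k}})-n\,\mathrm{id}$, verify the $\mathfrak{sl}_{2}$-relations via Lemma~\ref{exc}, and invoke Lemma~\ref{HL}. Your added framing of $H_{n}$ as $\Lambda^{\bullet}(V\oplus V^{\ast})$ with $L$ conjugate to wedging by the symplectic form $\omega=\sum_{k}\theta_{k}\wedge\xi_{k}$ is a pleasant conceptual bonus not spelled out in the paper, but it is supplementary rather than a different proof.
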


\begin{proof}
By definition, $\langle-,-\rangle$ is a graded bilinear form. It is $S_{n}$-equivariant and non-degenerate since the multiplication map
\begin{equation} \label{multi}
\Lambda^{j} \otimes \left(\Lambda^{*}\right)^{k} \bigotimes \Lambda^{n-j} \otimes \left(\Lambda^{*}\right)^{n-k} \rightarrow H_{n}^{n} \cong \mathbb{R}
\end{equation} 
is an $S_{n}$-equivariant perfect pairing for any $0 \leq j, k \leq n$,
which proves part (a). 

Let us turn to the proof of part (b). It is clear that $\operatorname{L}$ is $S_{n}$-equivariant. To show that $\operatorname{L}$ satisfies hard Lefschetz, we define the lowering operator
$\operatorname{F}: H_{n}^{i} \longrightarrow H_{n}^{i-2}$ as 
$$
\operatorname{F}:=\sum_{k=1}^{n} \operatorname{i}_{\xi_{k}} \otimes \operatorname{i}_{\theta_{k}}.$$

A direct computation based on Lemma \ref{exc} shows that
$$
 \operatorname{L} \operatorname{F}-\operatorname{F} \operatorname{L}=\sum_{k=1}^{n} (\operatorname{e}_{\theta_{k}} \circ \operatorname{i}_{\xi_{k}} \otimes \operatorname{ id }+ \operatorname{ id } \otimes \operatorname{e}_{\xi_{k}} \circ \operatorname{i}_{\theta_{k}})-n \operatorname{ id } \otimes \operatorname{ id }.
 $$
 If we define 
$\operatorname{h}: H_{n}^{i} \longrightarrow H_{n}^{i}$ to be the linear map 
$$\operatorname{h}:=\sum_{k=1}^{n} (\operatorname{e}_{\theta_{k}} \circ \operatorname{i}_{\xi_{k}} \otimes \operatorname{ id }+ \operatorname{ id } \otimes \operatorname{e}_{\xi_{k}} \circ \operatorname{i}_{\theta_{k}})-n \operatorname{ id } \otimes \operatorname{ id },
$$ 
then it is not hard to see that $\operatorname{h} \cdot v=i v$ for all $v \in H_{n}^{i}$ as required. By checking the $\mathfrak{s l_{2}}$-relations for $\langle \operatorname{L}, \operatorname{F}, \operatorname{h} \rangle$ using Lemma \ref{exc}, $\operatorname{L}$ satisfies hard Lefschetz by Lemma \ref{HL}.
The proof is completed.
\end{proof}

\begin{rmk} \label{coun}
The main difference between $H_{n}$ and $H_{n,m}$ (or $H_{n,m}^{\prime}$) is that $H_{n}$ does not satisfy the parity vanishing. Note that commuting the multiplication order of the factors in \eqref{multi} yields a sign of $(-1)^{j(n-j)+k(n-k)}$, thus when $n$ is even, the bilinear form $\langle-,-\rangle$
is skew-symmetric on the odd part.
By part (b) of
Lemma \ref{exc}, $\operatorname{L}$ is a Lefschetz (i.e., self-adjoint) operator with respect to the symmetric bilinear form $\langle-,-\rangle$
except $n$ is even and $j+k=i$ is odd, and is skew-self-adjoint with respect to the skew-symmetric bilinear form $\langle-,-\rangle$ if it is the case.
So in any case, the Lefschetz form is a symmetric bilinear form. However, the Hodge--Riemann bilinear relations are NOT satisfied. For example, when $n=2$, it is easy to compute the signature of the Lefschetz form is $(2,2)$ on degree $-1$ and is $(3,3)$ on degree $0$, neither of which matches the prediction of Hodge--Riemann bilinear relations. 
\end{rmk}

\begin{rmk}
There is also an $S_{n}$-equivariant $\mathfrak{s l_{2}}$-action on the “usual” gradings of the tensor product of the polynomial ring $D=\mathbb{R}\left[d_{1}, \ldots, d_{n}\right]$ and $R=\mathbb{R}\left[x_{1}, \ldots, x_{n}\right]$: $$H_{n}^{\prime}=\bigoplus_{i=0}^{\infty} (H_{n}^{\prime})^{i} \text { , with } (H_{n}^{\prime})^{i}:=\bigoplus_{j+k=i} D^{j} \otimes R^{k},$$
via 
$$\operatorname{e}=-\sum_{l=1}^{n} \frac{\partial}{\partial d_{l}} \otimes \frac{\partial}{\partial x_{l}}, 
\operatorname{f}=\sum_{l=1}^{n} d_{l} \otimes x_{l} \text{ , and } 
\operatorname{h}_{i}=-n \operatorname{ id } \otimes \operatorname{ id }-\sum_{l=1}^{n} (d_{l} \frac{\partial}{\partial d_{l}} \otimes \operatorname{ id }+\operatorname{ id } \otimes x_{l} \frac{\partial}{\partial x_{l}}).$$
But neither Poincar\'e duality nor hard Lefschetz holds and the representation $H_{n}^{\prime}$ is infinite-dimensional.
\end{rmk}

\section{Questions and Future work}

We would like to finish with some questions and conjectures. Since our constructions of these K\"ahler packages are purely algebraic, it is natural to ask the following question first.

\begin{ques}
Are there any geometric interpretations of these constructions?
\end{ques}

In \cite{kim2022lefschetz}, the authors use the equivariant hard Lefschetz in Theorem \ref{kah3} (which is their Theorem 3.2) as a key tool to study the fermionic diagonal coinvariants. We would like to ask the following

\begin{ques}
Do the other two equivariant K\"ahler packages (Theorem \ref{kah} and Theorem \ref{kah2}), especially the Hodge--Riemann relations, have some implications for the diagonal coinvariant ring and the fermionic diagonal coinvariant ring?
\end{ques}

In \cite{novak2020increasing}, the authors transformed the log-concavity conjecture of the distribution of permutations in $S_{n}$ with fixed length of longest increasing subsequences in \cite{chen2008log} into the following equivariant log-concavity conjecture of $S_{n}$

\begin{conj} \cite[Conjecture 2]{novak2020increasing}
Let $\lambda$ be a partition of $n$ and $V^{\lambda}$ be the irreducible representation of $S_{n}$ corresponding to $\lambda$, then the graded representation $V_{n}=\bigoplus_{k=0}^{n} V_n^{k}$ of $S_{n}$ is equivariantly log-concave, where
$$V_{n}^{k}=\bigoplus_{\substack{\lambda \vdash n \\ \ell(\lambda)=k}} f^{\lambda} V^{\lambda},$$
here $f^{\lambda}$ is the dimension of $V^{\lambda}$, also equal to the number of standard Young tableaux of shape $\lambda$, which is given by the hook-length formula.
\end{conj}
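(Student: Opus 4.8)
The plan is to follow the paradigm of Theorems \ref{kah} and \ref{kah2}: reduce the equivariant log-concavity of $V_n=\bigoplus_k V_n^k$ to an equivariant hard Lefschetz statement on a tensor-product complex, and realize that statement through an explicit $S_n$-equivariant Lefschetz operator. For each $m$ form the graded $S_n$-representation
$$
\mathcal{H}_{n,m}=\bigoplus_{i=0}^{m}\mathcal{H}_{n,m}^{-m+2i},\qquad \mathcal{H}_{n,m}^{-m+2i}:=V_n^{\,i}\otimes V_n^{\,m-i},
$$
with $V_n^{\,i}=0$ outside $0\le i\le n$. Every nonzero degree of $\mathcal{H}_{n,m}$ is $\equiv m\pmod 2$, so parity vanishing holds for free; hence an $S_n$-equivariant degree-two operator $L\colon\mathcal{H}_{n,m}^{-m+2i}\to\mathcal{H}_{n,m}^{-m+2(i+1)}$ satisfying hard Lefschetz would force $V_n^{\,i}\otimes V_n^{\,m-i}\hookrightarrow V_n^{\,i+1}\otimes V_n^{\,m-i-1}$ for $i<m/2$, and taking $m=2i$ gives $V_n^{\,i-1}\otimes V_n^{\,i+1}\subset V_n^{\,i}\otimes V_n^{\,i}$, which is the conjecture (indeed its strong form, with $L$ itself the inclusion map). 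So the whole problem becomes: construct such an $L$ on $\mathcal{H}_{n,m}$, ideally as part of a full equivariant K\"ahler package as in Theorem \ref{main}.

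The difficulty is that, unlike $H^{*}\big((\mathbb{CP}^{\infty})^{n}\big)$ and $H^{*}\big((S^{1})^{n}\big)$, the representation $V_n$ has no manifest geometric or Lie-theoretic model: there the Lefschetz operator arose from a diagonal embedding $\mathfrak{sl}_2\hookrightarrow\bigoplus^{n}\mathfrak{sl}_2$ acting factorwise, and hard Lefschetz and Hodge--Riemann were imported from products of projective spaces. For $V_n$ I would start from the decomposition $\mathbb{R}[S_n]\cong\bigoplus_{\lambda\vdash n}\operatorname{End}(V^{\lambda})$ of the left regular representation: under it $V_n$ is $\mathbb{R}[S_n]$ regraded so that the block $\operatorname{End}(V^{\lambda})$ lies in degree $\ell(\lambda)$, equivalently $V_n$ is the associated graded of $\mathbb{R}[S_n]$ for the increasing filtration by the two-sided ideals $\bigoplus_{\ell(\lambda)\le k}\operatorname{End}(V^{\lambda})$. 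Since $\ell(\lambda)$ is, via RSK, the length of a longest decreasing subsequence, the natural hunt is for a degree-raising $S_n$-equivariant combinatorial operator on $\mathbb{R}[S_n]$ (or on the span of pairs of standard Young tableaux) of promotion / jeu-de-taquin / Bender--Knuth flavor whose iterates implement the isomorphisms $V_n^{\,i}\otimes V_n^{\,m-i}\xrightarrow{\ \sim\ }V_n^{\,m-i}\otimes V_n^{\,i}$; one would then assemble it into an $\mathfrak{sl}_2$-triple $\langle L,F,h\rangle$ and invoke Lemma \ref{HL}.

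I expect the construction of $L$ to be the main obstacle. Neither the diagonal-$\mathfrak{sl}_2$ trick nor a K\"ahler-cone/Lorentzian deformation argument is available off the shelf, because $V_n$ is not visibly the cohomology of a projective variety and the grading by $\ell(\lambda)$ does not match the cohomological grading on the usual models (coinvariant algebra, Springer fibers, Hessenberg varieties). A reasonable interim target is the hard Lefschetz part alone --- which, as emphasized at the end of \S\ref{section:eqKah}, is all that equivariant log-concavity requires --- tested first in low $n$ against the representation-stability computations of \cite{matherneequivariant,novak2020increasing}, and then upgraded, if a candidate survives, to a full package by identifying the primitive subspace of $\mathcal{H}^{0}_{n,2i}$ with the ``new part'' $V_n^{\,i}\otimes V_n^{\,i}\ominus L\big(V_n^{\,i-1}\otimes V_n^{\,i+1}\big)$ and pinning down the sign of the Lefschetz form on it. A more hands-on alternative would bypass $L$ and attack the bare inclusion directly, i.e. prove that $\operatorname{ch}(V_n^{\,i})*\operatorname{ch}(V_n^{\,i})-\operatorname{ch}(V_n^{\,i-1})*\operatorname{ch}(V_n^{\,i+1})$ is Schur-positive, where $*$ is the Kronecker product and $\operatorname{ch}(V_n^{k})=\sum_{\ell(\lambda)=k}f^{\lambda}s_{\lambda}$; but this exposes the notorious intractability of Kronecker coefficients and is unlikely to be easier than the Lefschetz route.
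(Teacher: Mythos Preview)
The statement you are addressing is a \emph{conjecture} in the paper, not a theorem: it appears in Section~5 (``Questions and Future work'') as an open problem quoted from \cite{novak2020increasing}, and the paper offers no proof or proof sketch for it. So there is no ``paper's own proof'' to compare your proposal against.

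Your proposal is not a proof either, and you are candid about this: it is a research program that correctly translates the conjecture into an equivariant hard Lefschetz statement on $\mathcal{H}_{n,m}=\bigoplus_i V_n^{\,i}\otimes V_n^{\,m-i}$, in direct analogy with Theorems~\ref{kah} and~\ref{kah2}, but then stops short of constructing the Lefschetz operator $L$. That construction is the entire content of the problem. In the polynomial and exterior cases the paper succeeds precisely because each graded piece is a tensor power of a one-dimensional building block, so the diagonal $\mathfrak{sl}_2\hookrightarrow\bigoplus^n\mathfrak{sl}_2$ and the geometry of $\prod_i\mathbb{CP}^{\alpha_i}$ are available; for $V_n^k=\bigoplus_{\ell(\lambda)=k}f^\lambda V^\lambda$ no such factorization exists, and your suggested sources for $L$ (promotion, jeu-de-taquin, Bender--Knuth) are speculative. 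The Kronecker-positivity reformulation you mention at the end is equivalent to the conjecture itself and, as you note, likely harder rather than easier. In short, your write-up is a reasonable outline of \emph{how one might try} to attack the conjecture along the lines of this paper, but it contains no argument that would settle it, and the paper does not claim one either.
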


Based on the representation stability theory and some numerical evidence, the first author has the following conjecture in \cite{gui2022on}

\begin{conj} \label{flag}
For all integer $n \geq 1$, the cohomology ring of the flag manifold $U_{n} / T$, which is also known as the coinvariant ring of $S_{n}$: $H^{2 *}(U_{n} / T, \mathbb{R})\cong \mathbb{R}\left[t_{1}, \ldots, t_{n}\right] /\left(\sigma_{1}, \ldots, \sigma_{n}\right)$, is equivariantly log-concave as graded representation of $S_{n}$, where each $t_{j}'s$ is of degree 2 and $\sigma_{i}$ is the $i$-th elementary symmetric polynomials in the variables $t_1, \ldots, t_n$.
\end{conj}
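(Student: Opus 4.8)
\textbf{Proof proposal for Conjecture~\ref{flag}.} The plan is to run the argument of Theorem~\ref{kah} with the coinvariant ring in place of the polynomial ring. Write $C=H^{2*}(U_{n}/T,\mathbb{R})=\mathbb{R}[t_{1},\dots,t_{n}]/(\sigma_{1},\dots,\sigma_{n})$, a graded Poincar\'e duality algebra (Artinian Gorenstein) with socle in degree $N=\binom{n}{2}=\dim_{\mathbb{C}}(U_{n}/T)$, and recall that every graded piece $C^{i}$ is self-dual as an $S_{n}$-representation (all irreducibles of $S_{n}$ are). Put $C_{i}:=(C^{i})^{*}$ (``homology''), and for a pair $(n,m)$ form $H^{\mathrm{fl}}_{n,m}=\bigoplus_{i=0}^{m}(H^{\mathrm{fl}}_{n,m})^{-m+2i}$ with $(H^{\mathrm{fl}}_{n,m})^{-m+2i}:=C_{i}\otimes C^{m-i}$, equipped with the pairing $\langle d\otimes f,\,d'\otimes f'\rangle:=(d,f')(d',f)$ coming from the tautological evaluation $C_{k}\times C^{k}\to\mathbb{R}$, just as in (\ref{pair}). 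As there, this pairing is symmetric, graded, non-degenerate and $S_{n}$-equivariant, so the analogue of part (a) is free; and once one has an $S_{n}$-equivariant Lefschetz operator $L\colon(H^{\mathrm{fl}}_{n,m})^{i}\to(H^{\mathrm{fl}}_{n,m})^{i+2}$ satisfying hard Lefschetz, the reasoning behind the corollary following Theorem~\ref{kah}, which deduces (\ref{str}) from (HL), applies verbatim and yields the strong equivariant log-concavity of $C$, i.e.\ Conjecture~\ref{flag}.

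The whole difficulty is thus to construct $L$ --- together with a lowering operator $F$ and a grading operator $h$ forming an $\mathfrak{sl}_{2}(\mathbb{R})$-triple, cf.\ Lemma~\ref{HL}. One wants $L$ of the shape ``(degree-raising on $C_{i}$)$\,\otimes\,$(degree-lowering on $C^{m-i}$)'', mirroring $L=\sum_{i}d_{i}\otimes\partial/\partial x_{i}$. But $C$ is a \emph{quotient} of $\mathbb{R}[t_{1},\dots,t_{n}]$: multiplication by $t_{i}$ descends to $C$ while $\partial/\partial t_{i}$ does not, and in the isomorphic harmonic model $\mathcal{H}=\{\, f : p(\partial)f=0 \text{ for all $S_{n}$-invariant }p\text{ with }p(0)=0 \,\}$ the operators $\partial/\partial t_{i}$ preserve $\mathcal{H}$ but multiplication by $t_{i}$ does not, so one is forced to post-compose with a harmonic projection and the clean $\mathfrak{sl}_{2}$-relations of Theorem~\ref{kah}(b) are destroyed. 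Sharper still: a \emph{multiplication} Lefschetz operator on $C$ itself would be multiplication by an ample class of $U_{n}/T$, i.e.\ by $\sum_{i}c_{i}t_{i}$ with $(c_{i})$ strictly dominant, and none of these is $S_{n}$-invariant --- indeed $\sum_{i}t_{i}=\sigma_{1}=0$ in $C$. So, unlike for $(\mathbb{CP}^{\infty})^{n}$ or $(S^{1})^{n}$, there is \emph{no} $S_{n}$-invariant ample/convexity datum available, and the geometric engine of Theorem~\ref{kah}(c) --- a decomposition into cohomologies of products of projective spaces carrying the tautological hard Lefschetz --- has no evident equivariant analogue. This is, in my view, the main obstacle and the reason the conjecture is open.

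I see three routes, in decreasing order of fidelity to the present paper. (i) Realize $C$ as an $S_{n}$-equivariant summand of an algebra to which Theorem~\ref{kah} applies --- e.g.\ via $\mathbb{R}[t_{1},\dots,t_{n}]\cong C\otimes\mathbb{R}[\sigma_{1},\dots,\sigma_{n}]$, which exhibits each $C^{i}$ as a canonical $S_{n}$-subrepresentation of $H^{2i}((\mathbb{CP}^{\infty})^{n})$ --- and try to restrict the equivariant $\mathfrak{sl}_{2}$-action of Theorem~\ref{kah} to the subspace $H^{\mathrm{fl}}_{n,m}\subset H_{n,m}$; the obstruction is that $L=\sum_{i}d_{i}\otimes\partial/\partial x_{i}$ does not preserve this subspace, so the splitting and the $\mathfrak{sl}_{2}$-structure must be reconciled by some new device. (ii) Prove the equivariant hard Lefschetz statement for $H^{\mathrm{fl}}_{n,m}$ uniformly in $n$ by a representation-stability / $\mathrm{FI}$-module argument, reducing it to finitely many $n$ that are then verified by computer, in the spirit of \cite{matherneequivariant} and \cite{novak2020increasing}. (iii) Attack the two injections $C^{i-1}\otimes C^{i+1}\hookrightarrow C^{i}\otimes C^{i}$ directly and combinatorially, using the known graded $S_{n}$-character of $C$ (the fake-degree polynomials, i.e.\ generating functions of standard Young tableaux by charge) together with the requisite inequalities among Kronecker coefficients; the Kronecker coefficients are the principal enemy here, which makes this the least promising route in general though possibly illuminating for small $n$. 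I would pursue (i), bearing in mind that some genuinely new equivariant replacement for the K\"ahler-cone induction will ultimately be required.
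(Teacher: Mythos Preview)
The statement you are addressing is a \emph{conjecture} in the paper, not a theorem: it appears in the final section ``Questions and Future work'' with no proof, and the text merely records it as an open problem from \cite{gui2022on}. There is therefore no ``paper's own proof'' to compare your proposal against.

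Your write-up is not a proof either, and you say so yourself: you set up the analogue $H^{\mathrm{fl}}_{n,m}$ of $H_{n,m}$, observe that Poincar\'e duality goes through for free, and then correctly isolate the obstruction --- the absence of any $S_{n}$-invariant ample (or even nonzero degree-$2$) class on $U_{n}/T$, since $\sum_i t_i=\sigma_1=0$ in $C$ --- which blocks both the explicit $\mathfrak{sl}_2$-triple and the geometric decomposition (\ref{dec}) that drive Theorem~\ref{kah}. That diagnosis is accurate and is precisely why the paper leaves this as a conjecture rather than a theorem.

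Of your three routes, (i) is closest in spirit to the paper but, as you note, the operator $L=\sum_i d_i\otimes\partial/\partial x_i$ does not preserve the harmonic/coinvariant subspace, so you would need a genuinely new equivariant Lefschetz operator, not a restriction of the existing one; nothing in the paper suggests how to build it. Route (ii) is exactly the strategy of \cite{matherneequivariant} that the paper cites as giving only low-degree, computer-assisted evidence, so it would not yield a full proof. Route (iii) runs into Kronecker coefficients, which you rightly flag as the hardest ingredient. In short, your proposal is a reasonable research outline that identifies the real difficulty, but it does not constitute a proof, and the paper does not claim one.
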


It is interesting to see whether similar equivariant Hodge-theoretic structures exist in the settings of the above two equivalent log-concavity conjectures and the settings of equivalent log-concavity conjectures in \cite{matherneequivariant}.

\textbf{Acknowledgment}. 

The first author would like to thank Ming Fang, Peter L. Guo, Hongsheng Hu, Ruizhi Huang, and Nanhua Xi for the useful discussions, and thank Nicholas Proudfoot and Brendon Rhoades for helpful comments, and is grateful to Neil J.Y. Fan for the help with computer calculations. This work was completed when the second author visited the AMSS, CAS. The second author would like to thank Sian Nie for the invitation. The authors would like to thank the anonymous referee for the comments and suggestions. The second author was supported in part by the National Natural Science Foundation of China (No. 11922119). 

\bibliography{template}

\end{document}